\newtheorem{theorem}{Theorem}[section]
\newtheorem{corollary}[theorem]{Corollary}
\newtheorem{lemma}[theorem]{Lemma}
\newtheorem{question}[theorem]{Question}
\newtheorem{proposition}[theorem]{Proposition}
\newtheorem*{genericthm*}{\thistheoremname}
\newenvironment{namedthm*}[1]
  {\renewcommand{\thistheoremname}{#1}
   \begin{genericthm*}}
  {\end{genericthm*}}
\theoremstyle{definition}
\newcommand{\thistheoremname}{}
\theoremstyle{remark}
\newtheorem*{remark}{Remark}
\DeclareMathOperator{\diag}{diag}
\newcommand{\vd}{\mathbf{d}}
\newcommand{\vr}{\mathbf{r}}
\keywords{arithmetical structures, spectral radius, cycle graph, Laplacian}
\subjclass{05C50, 11C20, 15A18}
\title[Spectral Radii of Arithmetical Structures on Cycle Graphs]{Spectral Radii of Arithmetical Structures on Cycle Graphs}
\author{Alexander Diaz-Lopez}
\address[A.~Diaz-Lopez]{Department of Mathematics and Statistics, Villanova University, 800 Lancaster Ave (SAC 305), Villanova, PA 19085, USA}
\email{alexander.diaz-lopez@villanova.edu}
\author{Kathryn Haymaker}
\address[K.~Haymaker]{Department of Mathematics and Statistics, Villanova University, 800 Lancaster Ave (SAC 305), Villanova, PA 19085, USA}
\email{kathryn.haymaker@villanova.edu}
\author{Michael Tait}
\address[M.~Tait]{Department of Mathematics and Statistics, Villanova University, 800 Lancaster Ave (SAC 305), Villanova, PA 19085, USA}
\email{michael.tait@villanova.edu}
\begin{document}
\begin{abstract}
    Let $G$ be a finite, connected graph. An arithmetical structure on $G$ is a pair of positive integer-valued vectors $(\vd,\vr)$ such that $(\diag(\vd)-A_G)\cdot \vr=\textbf{0},$ where the entries of $\vr$ have $\gcd$ 1 and $A_G$ is the adjacency matrix of $G$. In this article we find the arithmetical structures that maximize and minimize the spectral radius of $(\diag(\vd)-A_G)$ among all arithmetical structures on the cycle graph $\mathcal{C}_n.$
\end{abstract}
\maketitle

\section{Introduction}

Given a finite connected graph $G$ with vertex set $V$ of cardinality $n$, an arithmetical structure on $G$ is a tuple of vectors $(\vd,\vr) \in \mathbb{Z}^n_{>0}\times \mathbb{Z}^n_{>0}$ such that 
\begin{equation}\label{eq:arithdefinition}
(\diag(\vd)-A_G)\cdot \vr = \textbf{0},
\end{equation}
where the entries of $\vr$ have $\gcd$ 1 and $A_G$ is the adjacency matrix of $G$; when no confusion arises, we will denote $A_G$ simply as $A$. Using the same ordering of the vertices of $G$ used in the adjacency matrix $A$, we can think of the entries of $\vr=(r_v)_{v\in V}$ as labels on the vertices.  Equation \eqref{eq:arithdefinition} states that, for every vertex $v$, the entry $r_v$ must divide the sum of the labels of adjacent vertices. More specifically, $r_vd_v=\sum_{w\sim v} r_{w}$, where $w\sim v$ denotes that $w$ and $v$ are adjacent vertices and $d_v$ are the entries of the $\vd$ vector. 

Arithmetical structures were introduced by Lorenzini \cite{L89} to study degeneration of curves in algebraic geometry. Lorenzini \cite[Lemma 1.6]{L89} used a non-constructive argument to show that the number of arithmetical structures on a finite connected graph $G$ is finite. Since then, several groups \cite{A20,A23,B18,C18,C17,GW,V21} have studied arithmetical structures on different families of graphs such as path graphs, cycle graphs, bidents, star graphs, complete graphs, $E_n$-graphs, and paths with a double edge. 
In general, it has proven difficult to find closed formulas for the number of arithmetical structures on a graph. Full enumeration results (including a closed formula) are only known for path graphs and cycle graphs \cite{B18}. For star graphs \cite[Corollary 3.1]{C17} and complete graphs \cite[Theorem 5.1]{C18}, the number of arithmetical structures is enumerated by variants of Egyptian fractions for which we have no known closed formulas.

For an arithmetical structure $(\vd,\vr)$ on a graph $G$, the matrix $(\diag(\vd)-A)$ is positive semidefinite and has rank $n-1$ \cite[Proposition 1.1]{L89}. Since the nullity of this matrix is $1$ and the entries of $\vr$ are positive and have gcd $1$, the vector $\vd$ uniquely determines the vector $\vr$. We will denote the matrix $(\diag(\vd)-A)$ by $L(G, \vd)$. These matrices generalize the  Laplacian matrix of $G$, which is the matrix $\diag(\mathrm{deg}(G))-A$ where $\mathrm{deg}(G)$ is the vector recording the degrees of the vertices of $G$. The pair $(\mathrm{deg}(G),\textbf{1})$ is an arithmetical structure and is called the \textit{Laplacian arithmetical structure} because it corresponds to the Laplacian matrix of a graph. Laplacian matrices of graphs are well-studied and several surveys have been written about them (for example \cite{MerrisSurvey1, MerrisSurvey2}) and their eigenvalues (for example \cite{Das, GMS, mohar, zhang}).  Famous applications of graph Laplacian eigenvalues include Kirchoff's theorem \cite{kirchhoff} from which one obtains the number of spanning trees in a graph, and theorems measuring connectivity via the second smallest eigenvalue, for example \cite{fiedler1, fiedler2}.

Given a real and symmetric matrix $M$, the spectral radius of $M$ is its largest eigenvalue. In the case of the Laplacian arithmetical structure, many papers have been written about the spectral radius of the Laplacian matrix $(\diag(\vd)-A)$ (for example \cite{guo, guo2, HZ, LSC, LLT, shi, YLT}). In \cite{W21}, Wang and Hou studied the spectral radius of the generalized Laplacian matrix $\diag(\vd)-A$ associated to arithmetical structures on the path graph $\mathcal{P}_{n}$. In particular, they show that among all structures on $\mathcal{P}_{n}$, the Laplacian arithmetical structure has the minimal spectral radius \cite[Theorem 4.1]{W21} and described the structure that maximizes the spectral radius \cite[Theorem 4.12]{W21}. In this paper, we consider the analogous questions for arithmetical structures on the cycle graph $\mathcal{C}_n$. We show that among all arithmetical structures on $\mathcal{C}_n$, the Laplacian arithmetical structure $(\mathrm{deg}(G),\textbf{1})$ minimizes the spectral radius (Theorem \ref{thm:minimal}) and the structure given by $\vd=(1,n+2,2,2,\ldots, 2)$ and $\vr=(n,1,2,3, \ldots, n-1)$ maximizes the spectral radius (Theorem \ref{thm:main}).

\section{Background}

\subsection{Arithmetical Structures on cycle graphs}\label{subsec:arith}
When $G$ is the cycle graph $\mathcal{C}_n$ with vertex set $\{v_1,v_2,\ldots,v_n\}$, Equation $\eqref{eq:arithdefinition}$ becomes 
$$\begin{pmatrix}
d_1&-1&0&0&\ldots&-1\\
-1&d_2&-1&0&\ldots&0\\
0&-1&d_3&-1&\ldots&0\\
0&0&\ddots&\ddots&\ddots&0\\
0&\ldots&0&-1&d_{n-1}&-1\\
-1&0&\ldots&0&-1&d_n\\
\end{pmatrix}\cdot \begin{pmatrix}
r_1\\r_2\\r_3\\\vdots\\ r_{n-1}\\r_n
\end{pmatrix} =\textbf{0},$$
or equivalently, $r_id_i=r_{i-1}+r_{i+1}$ for all $i\in[n]$, where the indices are taken modulo $n$. We associate the entries $r_i$ and $d_i$ with the vertex $v_i$ and consider them as labels on the vertex. Since the matrix $\diag(\vd)-A$ has rank $n-1$ \cite[Proposition 1.1]{L89}, the values of $\vd$ completely determine the structure $(\vd,\vr)$ as $\vr$ is the unique vector in the kernel of $\diag(\vd)-A$ with positive entries having $\gcd$ 1. Similarly, the vector $\vr$ determines the vector $\vd$ via the equations $d_i=(r_{i-1}+r_{i+1})/r_i$. Thus, we could define arithmetical structures on $\mathcal{C}_n$ as $\vr$-labels on the vertices of $G$ such that $r_i$ divides the sum of the $\vr$-labels of its two neighbors, 
and refer to them via their $\vr$-labels or via their $\vd$-labels. 

The vertices of  $\mathcal{C}_n$ can be thought of as the vertices of a regular $n$-gon. Under this view, we can apply any symmetry of the dihedral group $D_{2n}$ to these vertices and each vertex preserves its set of neighbor vertices. Thus, if $(\vd,\vr)$ is an arithmetical structure on $\mathcal{C}_n$, we can permute the entries of both $\vd$ and $\vr$ by any element in $D_{2n}$ and still get an arithmetical structure on $\mathcal{C}_n$. In Table \ref{tab:struc}, we show  the $\vd$ vector of  the arithmetical structures on $\mathcal{C}_n$ for $n=3,4,5,6$, up to symmetries given by the action of $D_{2n}$.
\begin{figure}
    \centering
\begin{tikzpicture}[
    tlabel/.style={pos=0.4,right=-1pt},
    baseline=(current bounding box.center),scale=1.5 ]
\node (4) at (0,0) [circle,draw=black, label=above right:{$(2,3)$},inner sep=2pt, minimum size=.2cm]{$v_4$};
\node (3) at (120:1) [circle,draw=black, label=above:{$(2,2)$},inner sep=2pt, minimum size=.2cm]{$v_3$};
\node (2) at ($(120:1)+(180:1)$) [circle,draw=black, label=above left:{$(8,1)$},inner sep=2pt, minimum size=.2cm]{$v_2$};
\node (1) at ($(120:1)+(180:1)+(240:1)$) [circle,draw=black, label=left:{$(1,6)$},inner sep=2pt, minimum size=.2cm]{$v_1$};
\node (6) at ($(120:1)+(180:1)+(240:1)+(300:1)$) [circle,draw=black, label=below left:{$(2,5)$},inner sep=2pt, minimum size=.2cm]{$v_6$};
\node (5) at ($(120:1)+(180:1)+(240:1)+(300:1)+(0:1)$) [circle,draw=black, label=below:{$(2,4)$},inner sep=2pt, minimum size=.2cm]{$v_5$};
\draw (1)--(2)--(3)--(4)--(5)--(6)--(1);
\end{tikzpicture}\begin{tikzcd}[scale=2,every label/.append
  style={font=\large}]
 \arrow[r,shift left,"\text{smoothing}"]
    &[3em]
 \arrow[l,shift left,"\text{subdivision}"]
\end{tikzcd}
\begin{tikzpicture}[
    tlabel/.style={pos=0.4,right=-1pt},
    baseline=(current bounding box.center),scale=1.5 ]
\node (4) at (0,0) [circle,draw=black, label=above right:{$(2,3)$},inner sep=2pt, minimum size=.2cm]{$v_4$};
\node (3) at (120:1) [circle,draw=black, label=above:{$(2,2)$},inner sep=2pt, minimum size=.2cm]{$v_3$};
\node (2) at ($(120:1)+(180:1)$) [circle,draw=black, label=above left:{$(7,1)$},inner sep=2pt, minimum size=.2cm]{$v_2$};
\node (6) at ($(120:1)+(180:1)+(240:1)+(300:1)$) [circle,draw=black, label=below left:{$(1,5)$},inner sep=2pt, minimum size=.2cm]{$v_6$};
\node (5) at ($(120:1)+(180:1)+(240:1)+(300:1)+(0:1)$) [circle,draw=black, label=below:{$(2,4)$},inner sep=2pt, minimum size=.2cm]{$v_5$};
\draw (2)--(3)--(4)--(5)--(6)--(2);
\end{tikzpicture}
    \caption{On the left, we have an arithmetical structure on $\mathcal{C}_6$ with vertices labeled as $(d_i,v_i)$. On the right, we have the structure obtained by smoothing the left structure at $v_1$.}
    \label{fig:subdivisionsmoothing}
\end{figure}
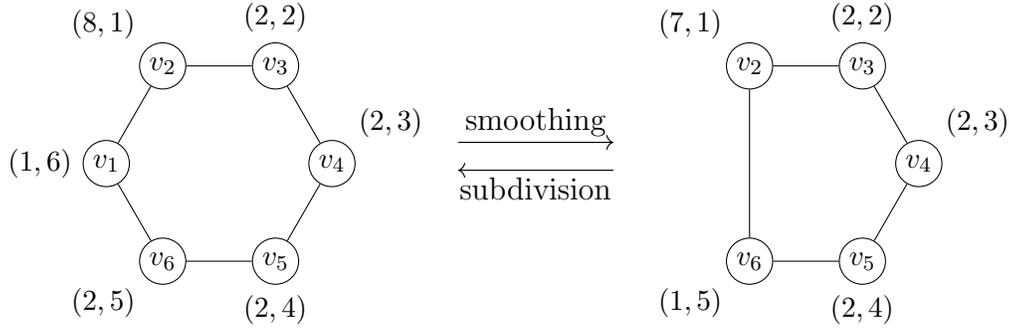

\begin{table}[]\label{tab:struc}
    \centering
    \begin{tabular}{|c|c|}\hline
         $n$&$\vd$ vector of arithmetical structures of $\mathcal{C}_n$ (up to symmetry)  \\\hline
         3& $(2,2,2)_{3.00}$, $(3,3,1)_{4.00}$, $\mathbf{(1,5,2)_{5.41}}$\\ \hline
         4& $(2,2,2,2)_{4.00}$, $(3,2,3,1)_{4.41}$, $(4,3,2,1)_{5.00}$, $(4,1,4,1)_{5.00}$, \\&
         $(5,3,1,2)_{5.73}$, $(6,1,3,1)_{6.41}$, $\mathbf{(1,6,2,2)_{6.45}}$
         \\\hline
         5 & $(2, 2, 2, 2, 2)_{3.62}$,
 $(3, 2, 2, 3, 1)_{4.00}$, $(1, 4, 1, 3, 3)_{4.62}$,\\&
 $(1, 4, 2, 3, 2)_{4.73}$, 
 $(4, 3, 3, 1, 2)_{5.00}$,
 $(1, 4, 4, 1, 3)_{5.24}$,\\&
 $(5, 1, 2, 4, 1)_{5.49}$, 
 $(3, 5, 1, 2, 2)_{5.64}$, 
 $(5, 3, 2, 1, 3)_{5.76}$,\\&
 $(1, 2, 2, 5, 4)_{5.87}$, 
 $(2, 1, 4, 1, 6)_{6.43}$,
 $(6, 2, 1, 3, 2)_{6.47}$,\\&
 $(1, 3, 6, 1, 3)_{6.49}$, 
 $(3, 1, 7, 1, 2)_{7.33}$, 
 $\mathbf{(1, 7, 2, 2, 2)_{7.35}}$\\ \hline
 6&$(2, 2, 2, 2, 2, 2)_{4.00}$,
$(3, 2, 2, 2, 3, 1)_{4.30}$,
$(3, 3, 1, 3, 3, 1)_{4.56}$,\\&
$(3, 2, 3, 1, 4, 1)_{4.73}$,
$(4, 2, 2, 3, 2, 1)_{4.76}$,
$(4, 2, 3, 3, 1, 2)_{4.90}$,\\&
$(4, 2, 4, 1, 3, 1)_{5.00}$,
$(4, 3, 1, 4, 2, 1)_{5.00}$,
$(4, 1, 4, 1, 4, 1)_{5.00}$,\\&
$(4, 3, 3, 2, 1, 3)_{5.16}$,
$(4, 3, 4, 1, 2, 2)_{5.22}$,
$(4, 4, 1, 4, 1, 2)_{5.33}$,\\&
$(3, 3, 1, 5, 1, 2)_{5.50}$,
$(5, 2, 3, 2, 2, 1)_{5.56}$,
$(5, 1, 4, 3, 1, 2)_{5.58}$,\\&
$(5, 2, 1, 5, 2, 1)_{5.65}$,
$(5, 3, 3, 1, 3, 1)_{5.68}$,
$(5, 1, 5, 1, 3, 1)_{5.68}$,\\&
$(5, 3, 2, 3, 1, 2)_{5.71}$,
$(1, 2, 3, 5, 3, 3)_{5.82}$,
$(5, 4, 1, 3, 2, 1)_{5.84}$,\\&
$(5, 4, 2, 1, 3, 2)_{5.91}$,
$(5, 3, 2, 2, 1, 4)_{5.95}$,
$(5, 4, 1, 3, 1, 3)_{5.95}$,\\&
$(5, 5, 1, 2, 2, 2)_{6.23}$,
$(4, 2, 1, 6, 1, 2)_{6.39}$,
$(6, 1, 4, 2, 2, 1)_{6.40}$,\\&
$(6, 1, 5, 1, 2, 2)_{6.45}$,
$(6, 2, 4, 1, 2, 2)_{6.48}$,
$(6, 2, 1, 5, 1, 2)_{6.48}$,\\&
$(6, 3, 2, 2, 2, 1)_{6.50}$,
$(6, 3, 2, 1, 4, 1)_{6.50}$,
$(6, 1, 4, 2, 1, 3)_{6.50}$,\\&
$(1, 2, 6, 3, 1, 4)_{6.53}$,
$(1, 2, 3, 2, 6, 3)_{6.54}$,
$(6, 4, 1, 2, 3, 1)_{6.61}$,\\&
$(7, 1, 4, 1, 3, 1)_{7.33}$,
$(7, 2, 3, 1, 3, 1)_{7.36}$,
$(2, 1, 7, 2, 1, 4)_{7.36}$,\\&
$(7, 1, 3, 3, 1, 2)_{7.36}$,
$(1, 2, 7, 2, 2, 3)_{7.39}$,
$(7, 3, 1, 3, 2, 1)_{7.40}$,\\&
$(8, 1, 3, 2, 2, 1)_{8.28}$,
$(8, 1, 2, 3, 2, 1)_{8.29}$,
$\mathbf{(1,8, 2, 2, 2, 2)_{8.30}}$
\\ \hline
    \end{tabular}
    \caption{The $\vd$ vectors of the arithmetical structures on $\mathcal{C}_n$ (up to symmetry), with subscript given by  $\mu_1(L(\mathcal{C}_n, \vd))$. The structure that maximizes $\mu_1(L(\mathcal{C}_n, \vd))$ is in bold.}
\end{table}

Braun et al. \cite[Theorem 30]{B18} showed that the total number of arithmetical structures on $\mathcal{C}_n$ is $\binom{2n-1}{n-1}$ and proved that all non-Laplacian arithmetical structures on $\mathcal{C}_n$ can be obtained from arithmetical structures on $\mathcal{C}_{n-1}$ via a process called \textit{subdivision}. This process, and its inverse process called \textit{smoothing}, play an important role for us in this paper. While these processes make sense for other graphs, we only present them here for cycle graphs. An example is shown in Figure \ref{fig:subdivisionsmoothing}.

Given an arithmetical structure $(\vd',\vr')$ on the cycle graph $\mathcal{C}_n$ and an index $i \in [n]$, we can create an arithmetical structure $(\vd,\vr)$ on a cycle graph on $n+1$ vertices by introducing a vertex $w$ between $v_i$ and $v_{i+1}$ (indices taken modulo $n$) with $\vr$-label $r_{w}=r_{i}'+r_{i+1}'$ and $\vd$-label $d_w=1$. Define $\vd$ and $\vr$ as follows
\[ 
d_v=\begin{cases}
d_v' & \text{ if } v\neq v_{i}, v\neq v_{i+1}, \text { and }  v\neq w\\
d_v'+1 &\text{ if } v= v_{i} \text { or } v= v_{i+1}\\
1 &\text{ if } v= w\\
\end{cases}
\qquad
r_v=\begin{cases}
r_v' & \text{ if } v\neq w\\
r_{i}'+r_{i+1}' &\text{ if } v=w.
\end{cases}
\]
The pair $(\vd,\vr)$ is called the subdivision of $(\vd',\vr')$ between vertices $v_i$ and $v_{i+1}$. 

The inverse operation of subdivision is called smoothing and can be performed at a vertex of the graph with associated $\vd$-label 1. Given an arithmetical structure $(\vd,\vr)$ on $\mathcal{C}_n$ and an index $i$ such that the $\vd$-label of $v_i$ is 1, consider the graph obtained by removing vertex $v_i$ and connecting $v_{i-1}$ and $v_{i+1}$ via an edge. Let $(\vd',\vr')$ be defined as follows on $\mathcal{C}_{n-1}$
\[ 
d_v'=\begin{cases}
d_v & \text{ if } v\neq v_{i-1} \text { and } v\neq v_{i+1}\\
d_v-1 &\text{ if } v= v_{i-1} \text { or } v= v_{i+1}\\
\end{cases}
\qquad
r_v'=
r_v \text{ for all vertices } v.
\]
The pair $(\vd',\vr')$ is called the smoothing of $(\vd,\vr)$ at vertex $v_i$. Corrales and Valencia \cite[Theorem 5.1]{C18} showed that these operations produce arithmetical structures on their respective resulting graphs and the operations are inverses of each other. 
\begin{remark}
Corrales and Valencia \cite{C18} defined a clique-star operation that generalizes the subdivision procedure. The clique-star removes all edges from a clique of a graph and introduce a new vertex connected to all the vertices of the clique. This clique-star operation is a special case of Lorenzini's blowup operation \cite[Page 485]{L89}, but this blowup operation is not always defined on graphs. In \cite{KR}, Keyes and Reiter present an operation that generalizes the smoothing procedure and can be applied to any vertex (without the condition that the $\vd$-label is 1). These generalized operations might be useful for future work extending our results and the results of \cite{W21} to other graphs.
\end{remark}

The following result is key in enumerating arithmetical structures on cycle graphs as it implies that non-Laplacian arithmetical structures on $\mathcal{C}_n$ can be created from structures on $\mathcal{C}_{n-1}$ via subdivision. We will make use of this result in some of our proofs. 

\begin{proposition}[{\cite[Theorem 6.5]{C18}}]\label{prop:dv=1}
If $(\vd, \vr)$ is an arithmetical structure of the cycle $\mathcal{C}_n$ with $n \geq 4$ vertices
and $\vd \neq (2,2,\cdots,2)$, then there exists a vertex $v_i$ of $\mathcal{C}_n$ such that
$d_i=1$ and $d_{i-1}\neq 1\neq d_{i+1}$, where the indices are taken modulo $n$.
\end{proposition}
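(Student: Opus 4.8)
The plan is to prove the statement in two independent pieces. The substantive part is to show that whenever $\vd \neq (2,2,\dots,2)$ there exists at least one index $i$ with $d_i = 1$; the claim that such an $i$ automatically satisfies $d_{i-1}\neq 1 \neq d_{i+1}$ will then follow from a short local argument. Throughout I would use only the defining relations $r_i d_i = r_{i-1}+r_{i+1}$ (indices mod $n$), the positivity of the $r_i$, and the fact that $\gcd(r_1,\dots,r_n)=1$; in particular the argument is purely combinatorial and does not require Lorenzini's rank statement.

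For the existence of an index with $d_i=1$, I would argue by contrapositive and assume $d_i \geq 2$ for every $i$. The key device is to pass to the cyclic sequence of first differences $s_i := r_{i+1}-r_i$. Rewriting the defining relation as $r_{i-1}+r_{i+1}-2r_i = r_i(d_i-2)$ and noting that the left-hand side equals $s_i - s_{i-1}$, I obtain
\[
s_i - s_{i-1} = r_i(d_i - 2) \geq 0 \qquad \text{for all } i,
\]
so $(s_i)$ is non-decreasing as $i$ runs once around the cycle. Because the indices close up, this yields the cyclic chain $s_1 \leq s_2 \leq \cdots \leq s_{n}\leq s_1$, forcing all $s_i$ to be equal; since $\sum_i s_i = \sum_i (r_{i+1}-r_i) = 0$ telescopes to $0$ around the cycle, the common value is $0$. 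Hence $\vr$ is constant, and having $\gcd$ $1$ forces $\vr = \mathbf{1}$, whereupon every defining relation gives $d_i = 2$. This contradicts $\vd \neq (2,\dots,2)$, so some $d_i=1$ must exist.

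For the second piece, suppose $d_i = 1$. Then the defining relation reads $r_i = r_{i-1}+r_{i+1}$, and since $r_{i-1},r_{i+1}\geq 1$ this makes $v_i$ a strict local maximum: $r_i > r_{i-1}$ and $r_i > r_{i+1}$. If a neighbor, say $v_{i+1}$, also had $d_{i+1}=1$, the same reasoning applied at $v_{i+1}$ would give $r_{i+1} > r_i$, contradicting $r_i > r_{i+1}$; the symmetric argument rules out $d_{i-1}=1$. Thus any vertex with $\vd$-label $1$ automatically has both neighbors with $\vd$-label different from $1$, which completes the proof.

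The only point requiring care is the wrap-around step in the difference argument: one must make sure the inequality $s_n \geq s_1$ is genuinely included so that the chain closes into a loop and collapses all the $s_i$ to a single value, rather than merely producing a monotone path. Everything else is bookkeeping; notably, the argument works verbatim for every $n\geq 3$, so the hypothesis $n\geq 4$ in the statement is not actually needed for this combinatorial proof.
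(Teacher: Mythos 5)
Your proof is correct, and there is nothing in the paper to compare it against line by line: the paper does not prove this proposition at all, importing it as \cite[Theorem 6.5]{C18}. So your argument supplies a self-contained elementary proof where the paper has only a citation. Both halves check out. For existence of a $d_i=1$ vertex, your identity $s_i - s_{i-1} = r_i(d_i-2)$ with $s_i = r_{i+1}-r_i$ says that $\vr$ is ``discretely convex'' around the cycle when all $d_i \geq 2$; you correctly handle the wrap-around (the instance $i=1$ gives $s_n \leq s_1$, closing the chain $s_1 \leq \cdots \leq s_n \leq s_1$), so all $s_i$ coincide, and the telescoping $\sum_i s_i = 0$ forces $s_i \equiv 0$, whence $\vr$ is constant, the gcd condition gives $\vr = \mathbf{1}$, and the defining relations give $\vd = (2,\ldots,2)$. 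For the neighbor condition, the observation that $d_i=1$ makes $v_i$ a strict local maximum of $\vr$ (since $r_i = r_{i-1}+r_{i+1} > \max(r_{i-1},r_{i+1})$ by positivity) rules out two adjacent $1$'s, so you in fact prove the stronger statement that \emph{every} vertex with $d$-label $1$ has both neighbors with label $\neq 1$, not merely that one such vertex exists. A more common route to the existence half is to take a vertex where $r_i$ is maximal with a strictly smaller neighbor, so that $d_i r_i = r_{i-1}+r_{i+1} < 2r_i$ forces $d_i = 1$; this is essentially a localized version of your convexity argument, and yours has the advantage of deriving the full rigidity ($\vr$ constant) in one sweep. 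Your closing remark is also right and is corroborated by the paper's Table 1: the $n=3$ structures $(3,3,1)$ and $(1,5,2)$ satisfy the conclusion, so the hypothesis $n \geq 4$ (inherited from the cited source) is not needed for your argument, which works verbatim for all $n \geq 3$.
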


\subsection{Spectral radius of matrices}
In this subsection we collect well-known results about the spectral radius of matrices that we will use. All of our matrices are real and symmetric, and so they admit an orthonormal basis of eigenvectors corresponding to real eigenvalues. A consequence of this is the Rayleigh principle, which is a consequence of the min-max theorem. This theorem is classical, see Theorem 8.8 in \cite{zhangbook} for a reference.

\begin{lemma}\label{lem:rayleigh}[Rayleigh Principle]
If $M$ is a real and symmetric matrix, then its spectral radius is given by 
\[
\max_{\mathbf{x}\not=0} \frac{\mathbf{x}^T M \mathbf{x}}{\mathbf{x}^T\mathbf{x}}. 
\]
\end{lemma}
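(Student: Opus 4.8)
The plan is to deduce the identity directly from the spectral theorem, which the surrounding discussion already grants us: since $M$ is real and symmetric it admits an orthonormal basis of eigenvectors $u_1,\dots,u_n$ with real eigenvalues, which I would order as $\lambda_1\geq\lambda_2\geq\cdots\geq\lambda_n$, so that $\lambda_1$ is exactly the spectral radius in the sense used here, namely the largest eigenvalue. Because the Rayleigh quotient is invariant under the scaling $\mathbf{x}\mapsto c\mathbf{x}$, it suffices to analyze it through the coordinates of an arbitrary nonzero vector in this eigenbasis.

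First I would expand a nonzero vector as $\mathbf{x}=\sum_{i=1}^n c_i u_i$. Using $Mu_i=\lambda_i u_i$ and orthonormality $u_i^T u_j=\delta_{ij}$, the denominator becomes $\mathbf{x}^T\mathbf{x}=\sum_i c_i^2$ and the numerator becomes $\mathbf{x}^T M\mathbf{x}=\sum_i \lambda_i c_i^2$. Hence the Rayleigh quotient equals $\left(\sum_i \lambda_i c_i^2\right)\big/\left(\sum_i c_i^2\right)$, which is a weighted average of the eigenvalues with nonnegative weights $c_i^2/\sum_j c_j^2$ summing to $1$.

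Next I would bound this weighted average: since the weights are nonnegative and sum to $1$, the quotient is at most the largest eigenvalue $\lambda_1$, giving $\mathbf{x}^T M\mathbf{x}/(\mathbf{x}^T\mathbf{x})\leq\lambda_1$ for every nonzero $\mathbf{x}$. To confirm that the supremum is attained, and therefore equals $\lambda_1$, I would take $\mathbf{x}=u_1$, for which the quotient equals $\lambda_1$ exactly. Combining the upper bound with this attained value yields the asserted equality.

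There is essentially no hard step here, so the only point warranting care is the convention: this paper defines the spectral radius as the largest eigenvalue rather than the largest eigenvalue in absolute value, so the argument identifies the maximum of the quotient with $\lambda_1$ and no absolute values enter. If one additionally wanted uniqueness of the maximizer one would observe that equality forces $c_i=0$ whenever $\lambda_i<\lambda_1$, but the statement only records the value of the maximum, so the attainment at $u_1$ suffices.
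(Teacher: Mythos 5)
Your proof is correct and complete. Note that the paper itself offers no proof of this lemma: it simply cites it as classical (Theorem 8.8 in \cite{zhangbook}), so your eigenbasis expansion is exactly the standard argument that citation stands in for---decompose $\mathbf{x}=\sum_i c_i u_i$ in an orthonormal eigenbasis, observe the quotient is a convex combination $\left(\sum_i \lambda_i c_i^2\right)/\left(\sum_i c_i^2\right)$ of the eigenvalues, bound it by $\lambda_1$, and attain the bound at $u_1$. Your closing remark about the convention is well taken and worth emphasizing: for a general real symmetric matrix the usual spectral radius $\max_i |\lambda_i|$ is \emph{not} equal to the maximum Rayleigh quotient (consider $M=-I$), so the lemma as stated is only true under the paper's explicit convention that ``spectral radius'' means the largest eigenvalue; in the paper's applications this distinction is harmless since the matrices $L(G,\vd)$ are positive semidefinite, but your proof correctly identifies the maximum with $\lambda_1$ and introduces no absolute values, which is precisely what the statement requires.
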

Two corollaries of Lemma \ref{lem:rayleigh} are the following.

\begin{corollary}\label{cor:absolute value}
Let $M$ be a real symmetric matrix, and $|M|$ be the matrix obtained by taking the absolute value of each entry of $M$. Then the spectral radius of $|M|$ is at least as big as the spectral radius of $M$.
\end{corollary}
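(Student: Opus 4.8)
The plan is to deduce the inequality directly from the Rayleigh principle (Lemma \ref{lem:rayleigh}), by transporting an optimal test vector for $M$ into an admissible test vector for $|M|$. First I would let $\mathbf{x}$ be a unit eigenvector of $M$ corresponding to its largest eigenvalue; since $M$ is real and symmetric, Lemma \ref{lem:rayleigh} guarantees that this maximum is attained, and the spectral radius of $M$ equals $\mathbf{x}^T M \mathbf{x}$ (with $\mathbf{x}^T\mathbf{x}=1$).

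Next I would form the vector $|\mathbf{x}|$ whose $i$th entry is $|x_i|$, and compare the two quadratic forms term by term. Writing $M_{ij}$ and $(|M|)_{ij}=|M_{ij}|$ for the entries, the triangle inequality applied to each summand gives
\[
\mathbf{x}^T M \mathbf{x} = \sum_{i,j} M_{ij}\, x_i\, x_j \;\le\; \sum_{i,j} |M_{ij}|\, |x_i|\, |x_j| \;=\; |\mathbf{x}|^T\, |M|\, |\mathbf{x}|.
\]
Because $|\mathbf{x}|^T|\mathbf{x}| = \sum_i |x_i|^2 = \mathbf{x}^T\mathbf{x} = 1$, the vector $|\mathbf{x}|$ is a unit vector, so it is an admissible competitor in the Rayleigh quotient for $|M|$.

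Finally, noting that $|M|$ is again real and symmetric, I would apply Lemma \ref{lem:rayleigh} to $|M|$ and chain the estimates: the spectral radius of $|M|$ is the maximum of the Rayleigh quotient over all nonzero vectors, hence at least $|\mathbf{x}|^T|M||\mathbf{x}|$, which by the display is at least $\mathbf{x}^T M \mathbf{x}$, the spectral radius of $M$. This yields the claim.

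The argument is short and presents no real obstacle once the test vector $|\mathbf{x}|$ is identified; the only point meriting care is purely definitional. Here \emph{spectral radius} means the largest eigenvalue (as defined before Lemma \ref{lem:rayleigh}), rather than the largest eigenvalue in absolute value, and it is exactly this quantity that the Rayleigh quotient computes for a real symmetric matrix. Keeping this consistent on both sides is what makes the single inequality above suffice.
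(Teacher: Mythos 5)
Your proof is correct, and it is exactly the intended argument: the paper states this corollary without proof as an immediate consequence of the Rayleigh principle (Lemma~\ref{lem:rayleigh}), and your derivation---taking a top unit eigenvector $\mathbf{x}$ of $M$, bounding $\mathbf{x}^T M \mathbf{x} \le |\mathbf{x}|^T |M| |\mathbf{x}|$ termwise, and applying the Rayleigh quotient to $|M|$ with the test vector $|\mathbf{x}|$---is the standard way to fill it in. Your closing remark about the paper's convention (spectral radius meaning largest eigenvalue, which is what the Rayleigh quotient computes) is also the right point of care.
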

\begin{corollary}\label{cor:interlacing}
Let $M$ be a real symmetric matrix and $M_i$ the matrix with the $i$'th row and column removed. Then the spectral radius of $M$ is at least as big as the spectral radius of $M_i$.
\end{corollary}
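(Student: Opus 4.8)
The plan is to deduce the inequality directly from the Rayleigh principle (Lemma \ref{lem:rayleigh}) by relating the quadratic forms of $M$ and $M_i$ via a zero-padding of vectors; this is the easy ``top'' half of Cauchy eigenvalue interlacing, and it is all that is needed here. Write $M$ as an $n \times n$ symmetric matrix and let $M_i$ be the $(n-1)\times(n-1)$ principal submatrix obtained by deleting row and column $i$. By Lemma \ref{lem:rayleigh}, the spectral radius (largest eigenvalue) of $M_i$ equals $\max_{\mathbf{y}\neq 0}(\mathbf{y}^T M_i \mathbf{y})/(\mathbf{y}^T\mathbf{y})$, where $\mathbf{y}$ ranges over nonzero vectors in $\mathbb{R}^{n-1}$. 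Since $M_i$ is a finite symmetric matrix, this maximum is attained at an eigenvector; let $\mathbf{y}$ be such a maximizer.

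The key step is to lift $\mathbf{y}$ to a vector $\mathbf{x}\in\mathbb{R}^n$ by inserting a $0$ in coordinate $i$ and keeping the remaining coordinates in their original order. Because $x_i=0$, every term of the quadratic form $\mathbf{x}^T M \mathbf{x}=\sum_{j,k} M_{jk}x_j x_k$ that involves row $i$ or column $i$ of $M$ vanishes, so the surviving terms are precisely those appearing in $\mathbf{y}^T M_i \mathbf{y}$; hence $\mathbf{x}^T M \mathbf{x}=\mathbf{y}^T M_i \mathbf{y}$, while clearly $\mathbf{x}^T\mathbf{x}=\mathbf{y}^T\mathbf{y}$ and $\mathbf{x}\neq 0$. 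Applying the Rayleigh principle to $M$ then gives
\[
\max_{\mathbf{z}\neq 0}\frac{\mathbf{z}^T M \mathbf{z}}{\mathbf{z}^T\mathbf{z}} \;\geq\; \frac{\mathbf{x}^T M \mathbf{x}}{\mathbf{x}^T\mathbf{x}} \;=\; \frac{\mathbf{y}^T M_i \mathbf{y}}{\mathbf{y}^T\mathbf{y}},
\]
where the left-hand side is the spectral radius of $M$ and the right-hand side is the spectral radius of $M_i$. This is the desired inequality.

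I do not anticipate any genuine obstacle: the only point requiring care is verifying that padding with a zero in the deleted coordinate exactly reproduces the quadratic form of the submatrix, and this is immediate once one observes that deleting row and column $i$ amounts to forcing the $i$th coordinate to be zero. I would present the argument at this level and not write out the double sum $\sum_{j,k}M_{jk}x_jx_k$ term by term.
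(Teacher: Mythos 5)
Your proof is correct and is exactly the argument the paper intends: the corollary is stated as an immediate consequence of the Rayleigh principle (Lemma \ref{lem:rayleigh}), and the zero-padding of the maximizing vector for $M_i$ is the standard way that implication works (note the paper defines spectral radius as the largest eigenvalue, so no issue arises with negative eigenvalues of large modulus). Nothing to add.
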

We will combine Corollary \ref{cor:absolute value} with a well-known result bounding the spectral radius in terms of maximum row sum, see for example Theorem 5.24 in \cite{zhangbook}.

\begin{lemma}\label{prop:rowsum}
Let $M$ be a non-negative, real, symmetric matrix. Then its spectral radius is bounded above by its maximum row sum. 
\end{lemma}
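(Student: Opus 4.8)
The plan is to invoke the Rayleigh principle (Lemma \ref{lem:rayleigh}) directly on $M$ and to dominate the resulting quadratic form by the maximum row sum. Write $R=\max_i\sum_j M_{ij}$ for the maximum row sum, and let $\mathbf{x}=(x_1,\ldots,x_n)$ be an arbitrary nonzero vector. Since the spectral radius equals $\max_{\mathbf{x}\neq 0}(\mathbf{x}^T M\mathbf{x})/(\mathbf{x}^T\mathbf{x})$, it suffices to prove the single inequality $\mathbf{x}^T M\mathbf{x}\le R\,\mathbf{x}^T\mathbf{x}$ for every $\mathbf{x}$, after which dividing by $\mathbf{x}^T\mathbf{x}>0$ and taking the maximum gives the bound.

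The key steps are elementary estimates on $\mathbf{x}^T M\mathbf{x}=\sum_{i,j}M_{ij}x_ix_j$. First I would use non-negativity of the entries together with $x_ix_j\le|x_i||x_j|\le\tfrac12(x_i^2+x_j^2)$ to obtain $\mathbf{x}^T M\mathbf{x}\le\tfrac12\sum_{i,j}M_{ij}(x_i^2+x_j^2)$. Next I would split the right-hand side into the two sums $\sum_{i,j}M_{ij}x_i^2=\sum_i x_i^2\big(\sum_j M_{ij}\big)$ and $\sum_{i,j}M_{ij}x_j^2=\sum_j x_j^2\big(\sum_i M_{ij}\big)$. The first is a weighted average of the row sums and is at most $R\sum_i x_i^2$; for the second I would use symmetry of $M$ to rewrite the $j$th column sum $\sum_i M_{ij}$ as the $j$th row sum, which is again at most $R$, so this sum is at most $R\sum_j x_j^2$. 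Adding and halving collapses everything to $R\,\mathbf{x}^T\mathbf{x}$, which is exactly the desired inequality.

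The only point demanding any care—and the closest thing to an obstacle—is the symmetry bookkeeping in the second sum: one must notice that $\sum_{i,j}M_{ij}x_j^2$ naturally produces \emph{column} sums, and that it is precisely the symmetry of $M$ that lets us replace each column sum by the corresponding row sum and hence bound it by $R$ rather than by a separate maximum column sum. As an alternative that sidesteps the quadratic-form manipulation, one can take a unit eigenvector $\mathbf{v}$ for the spectral radius $\lambda$, pass to its entrywise absolute value $\mathbf{w}=|\mathbf{v}|$ (which, since $M$ is non-negative, has Rayleigh quotient at least that of $\mathbf{v}$ and hence is also a maximizer, so an eigenvector for $\lambda$ with non-negative entries), and then evaluate $\lambda w_i=\sum_j M_{ij}w_j\le w_i\sum_j M_{ij}\le R\,w_i$ at an index $i$ with $w_i=\max_k|v_k|>0$, dividing by $w_i$ to finish. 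I would present the Rayleigh-quotient version as the main argument, as it is fully self-contained and does not rely on the Perron--Frobenius fact that a non-negative maximizing eigenvector exists.
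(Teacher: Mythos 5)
Your proof is correct. Note, however, that the paper does not prove this lemma at all: it is stated as a classical fact with a citation to Theorem 5.24 of \cite{zhangbook}, so there is no in-paper argument to compare against, and what you have supplied is a complete, self-contained substitute for that citation. Both of your arguments are sound. The main (Rayleigh-quotient) argument correctly reduces the claim to $\mathbf{x}^T M\mathbf{x}\le R\,\mathbf{x}^T\mathbf{x}$, and your bookkeeping is right at the one delicate point: the bound $x_ix_j\le \tfrac12(x_i^2+x_j^2)$ requires the non-negativity of the entries $M_{ij}$ to preserve the inequality termwise, and the sum $\sum_{i,j}M_{ij}x_j^2$ produces column sums, which symmetry converts back to row sums bounded by $R$. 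This also matches the paper's convention that the ``spectral radius'' of a real symmetric matrix means its largest eigenvalue, which is exactly what Lemma \ref{lem:rayleigh} computes. Your alternative argument --- passing to $\mathbf{w}=\abs{\mathbf{v}}$, observing that non-negativity makes $\mathbf{w}$ another Rayleigh maximizer and hence an eigenvector, and evaluating the eigenvalue equation at a maximal entry --- is the classical Perron--Frobenius-style proof that presumably underlies the cited textbook result, and it has the advantage of generalizing to non-negative matrices that are merely row-stochastic-like rather than symmetric (where the quadratic-form argument is unavailable); your choice to foreground the Rayleigh version is reasonable here since Lemma \ref{lem:rayleigh} is already on hand and the symmetric setting is all the paper needs.
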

Finally, we will use the Courant-Weyl inequalities (see Theorem 8.12 in \cite{zhangbook} for a reference).

\begin{lemma}[Courant-Weyl inequalities]\label{CW inequality}
Let $X$ and $Y$ be real symmetric matrices with eigenvalues $\mu_1(X) \geq \mu_2(X) \geq \cdots \geq \mu_n(X)$ and $\lambda_1(Y) \geq \lambda_2(Y) \geq \cdots \geq \lambda_n(Y)$, and let their sum have eigenvalues $\rho_1(X+Y) \geq \cdots \geq \rho_n(X+Y)$. Then for all $i$, we have
\[
\mu_i(X) + \lambda_n(Y) \leq \rho_i(X+Y) \leq \mu_i(X) + \lambda_1(Y).
\]
\end{lemma}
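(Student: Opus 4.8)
The plan is to deduce both inequalities from the Courant--Fischer min-max characterization of eigenvalues, which refines the Rayleigh principle of Lemma~\ref{lem:rayleigh} to all eigenvalues. Recall that for a real symmetric $n\times n$ matrix $M$ with eigenvalues $\mu_1(M)\geq\cdots\geq\mu_n(M)$, one has
\[
\mu_i(M)=\max_{\substack{S\subseteq\mathbb{R}^n\\ \dim S=i}}\ \min_{\substack{x\in S\\ x\neq 0}}\frac{x^TMx}{x^Tx},
\]
where the maximum ranges over all $i$-dimensional subspaces $S$; the case $i=1$ is exactly Lemma~\ref{lem:rayleigh}. This formula is the engine for the whole argument, since it lets us compare $X+Y$ to $X$ by controlling the contribution of $Y$ to the Rayleigh quotient uniformly.

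First I would prove the upper bound. Fix an index $i$ and an $i$-dimensional subspace $S$. For every nonzero $x\in S$,
\[
\frac{x^T(X+Y)x}{x^Tx}=\frac{x^TXx}{x^Tx}+\frac{x^TYx}{x^Tx}\leq \frac{x^TXx}{x^Tx}+\lambda_1(Y),
\]
where the inequality uses the Rayleigh bound $x^TYx/x^Tx\leq\lambda_1(Y)$. Taking the minimum over nonzero $x\in S$ and then the maximum over all $i$-dimensional $S$ gives $\rho_i(X+Y)\leq \mu_i(X)+\lambda_1(Y)$.

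For the lower bound I would argue symmetrically, replacing the Rayleigh upper bound on $x^TYx/x^Tx$ by the lower bound $x^TYx/x^Tx\geq\lambda_n(Y)$; the same passage from the inner minimum to the outer maximum yields $\rho_i(X+Y)\geq \mu_i(X)+\lambda_n(Y)$. Alternatively, the lower bound follows formally from the upper bound: writing $X=(X+Y)+(-Y)$ and applying the upper bound to this sum gives $\mu_i(X)\leq \rho_i(X+Y)+\lambda_1(-Y)=\rho_i(X+Y)-\lambda_n(Y)$, which rearranges to the claimed inequality.

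The only delicate point, and hence the main thing to justify carefully, is the interchange of the outer maximum and inner minimum with the additive constant. This is valid precisely because $\lambda_1(Y)$ (respectively $\lambda_n(Y)$) is independent of both $x$ and $S$, so it factors out of the inner minimum and then out of the outer maximum without affecting the optimizing subspace. Beyond invoking the full min-max theorem in place of the bare spectral-radius statement of Lemma~\ref{lem:rayleigh}, no additional machinery is required.
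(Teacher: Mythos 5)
Your proof is correct: the pointwise bounds $\lambda_n(Y)\leq x^TYx/x^Tx\leq\lambda_1(Y)$ fed into the Courant--Fischer characterization, with the constant passing through the inner minimum and outer maximum, is exactly the standard argument, and your alternative derivation of the lower bound from the upper bound via $X=(X+Y)+(-Y)$ and $\lambda_1(-Y)=-\lambda_n(Y)$ is also sound. The paper itself gives no proof, citing this as a classical result (Theorem 8.12 in \cite{zhangbook}), and your argument is precisely the textbook proof that citation stands in for, so there is nothing to compare beyond noting the match.
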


\section{Minimum spectral {radius}}
In this section we show that the arithmetical structure on $\mathcal{C}_n$ that minimizes the spectral radius of the matrix $L=(\diag(\vd)-A)$ is $(\vd,\vr)$ with $\vd=(2,2,\ldots, 2)$ and $\vr=(1,1,\ldots, 1)$, that is, the Laplacian arithmetical structure. {For comparison, in Table \ref{tab:struc} we show the different arithmetical structures (up to symmetries) on $\mathcal{C}_n$  for $n = 3,4,5,6$  with their spectral radius}. 
Throughout this section, given a real and symmetric matrix $M$, we will use $\mu_1(M)$ to denote its spectral radius. We start with a proposition on the spectral radius of the Laplacian arithmetical structure for the cycle which is not new but for which we provide a proof for completeness.

\begin{proposition}\label{prop:mulaplacian}
Let $\textbf{d}$ be the Laplacian arithmetical structure on $\mathcal{C}_n$ {for $n\geq 3$}. Then, $\mu_1(L(\mathcal{C}_n,\textbf{d}))=4$ if $n$ is even and $\mu_1(L(\mathcal{C}_n,\textbf{d}))<4$ if $n$ is odd.
\end{proposition}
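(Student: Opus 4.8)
The plan is to compute the eigenvalues of $L(\mathcal{C}_n, \textbf{d})$ explicitly, since for the Laplacian arithmetical structure we have $\vd = (2,2,\ldots,2)$, which makes $L = 2I - A$ a circulant matrix. The eigenvalues of the adjacency matrix $A$ of the cycle $\mathcal{C}_n$ are classical: they are $2\cos(2\pi k/n)$ for $k = 0, 1, \ldots, n-1$, with eigenvectors given by the entries $\omega^{jk}$ where $\omega = e^{2\pi i/n}$ is a primitive $n$-th root of unity. First I would record this fact (or rederive it in one line by checking that the proposed eigenvectors satisfy $A\mathbf{x} = 2\cos(2\pi k/n)\mathbf{x}$ via the recurrence $r_{j-1} + r_{j+1} = \lambda r_j$). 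Then the eigenvalues of $L = 2I - A$ are immediately $\mu_k = 2 - 2\cos(2\pi k/n)$ for $k = 0, 1, \ldots, n-1$.

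Next I would maximize $\mu_k = 2 - 2\cos(2\pi k / n)$ over $k \in \{0, 1, \ldots, n-1\}$. Since $\mu_k$ is an increasing function of $\cos(2\pi k/n)$ being as negative as possible, the spectral radius is $2 - 2\cos(2\pi k^*/n)$ where $2\pi k^*/n$ is as close as possible to $\pi$, i.e. $k^*$ is the integer closest to $n/2$. When $n$ is even, taking $k = n/2$ gives $\cos(\pi) = -1$ and hence $\mu_1 = 2 - 2(-1) = 4$ exactly. When $n$ is odd, no integer $k$ achieves the angle $\pi$; the closest we can get is $k = (n-1)/2$ or $k = (n+1)/2$, giving angle $\pi(n\mp 1)/n = \pi \mp \pi/n$, so $\cos$ of this angle equals $-\cos(\pi/n) > -1$ strictly, yielding $\mu_1 = 2 + 2\cos(\pi/n) < 4$.

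The bulk of this argument is routine trigonometry, so the main point requiring a little care is simply verifying that the maximum over the discrete set of indices behaves as claimed, namely confirming $\cos(2\pi k/n) = -1$ is attainable exactly when $2 \mid n$ and is otherwise bounded away from $-1$ by the closest admissible angle. I expect the only genuine obstacle, if any, is bookkeeping the off-by-one in identifying $k^*$ and confirming strictness of the inequality $-\cos(\pi/n) > -1$ for $n \geq 3$ odd, which holds because $0 < \pi/n < \pi$ forces $\cos(\pi/n) < 1$. An alternative route that avoids spectral computation entirely would be to apply Lemma \ref{prop:rowsum}: every row sum of $L = 2I - A$ equals $2 + 1 + 1 = 4$, giving $\mu_1 \leq 4$ directly for all $n$, and then to verify that $4$ is actually achieved (eigenvector of alternating signs) precisely when $n$ is even. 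I would likely present the explicit eigenvalue computation as the cleanest and most self-contained approach, since it delivers both the even and odd cases simultaneously.
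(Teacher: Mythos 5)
Your proposal is correct and follows essentially the same route as the paper's proof: both identify $L(\mathcal{C}_n,\mathbf{d}) = 2I - A_{\mathcal{C}_n}$, invoke the classical eigenvalues $2\cos(2\pi k/n)$ of the cycle's adjacency matrix, and maximize $2 - 2\cos(2\pi k/n)$ over $k$, which the paper writes compactly as $2 - 2\cos\left(2\pi\lfloor n/2\rfloor/n\right)$ --- exactly your $k^*$. Your extra care with the odd case (identifying $k^* = (n\pm 1)/2$ and the strict bound $\cos(\pi/n) < 1$) just makes explicit what the paper leaves to the reader, and your alternative row-sum sketch is fine as an upper bound but, as you implicitly sense, would need an additional argument to get strictness when $n$ is odd.
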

\begin{proof}
Since {each vertex has degree $2$ in $\mathcal{C}_n$}, we have that 
\[
L(\mathcal{C}_n,\textbf{d})=\textup{diag}(\mathbf{d}) - A_{\mathcal{C}_n} = 2I - A_{\mathcal{C}_n}.
\]
It is well-known (for example Section 1.4.3 of \cite{BH}) that $A_{\mathcal{C}_n}$ has eigenvalues $2\cos(2\pi j/n)$, where $j$ ranges from $0$ to $n-1$. Therefore, the maximum eigenvalue of the Laplacian structure is given by $2-2\cos\left(2\pi \lfloor \frac{n}{2} \rfloor / n \right)$. In particular, this quantity is equal to $4$ if $n$ is even and is strictly less than $4$ if $n$ is odd.
\end{proof}

\begin{lemma}\label{lem:31322onpath}
Let $n\geq5$ and $M$ be the $n\times n$ matrix
$$M=
\begin{pmatrix}
3&-1&0&0&0&\ldots&0\\
-1&1&-1&0&0&\ldots&0\\
0&-1&3&-1&0&\ldots&0\\
0&0&-1&2&-1&\ddots&0\\
0&0&\ddots&\ddots&\ddots&\ddots&0\\
0&0&\ldots&0&-1&2&-1\\
0&0&\ldots&0&0&-1&2\\
\end{pmatrix},$$
then $\mu_1(M)>4$.
\end{lemma}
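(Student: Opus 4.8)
The plan is to avoid computing the characteristic polynomial of the full $n\times n$ matrix and instead reduce everything to a single fixed-size computation. Since $M$ is real and symmetric, its eigenvalues are real, so it suffices either to exhibit a value $\lambda>4$ at which the characteristic polynomial changes sign, or (more cleanly) to find a small principal submatrix whose spectral radius already exceeds $4$ and then invoke interlacing.

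Concretely, I would let $B$ be the $5\times 5$ leading principal submatrix of $M$, namely the matrix with diagonal $(3,1,3,2,2)$ and all sub- and super-diagonal entries equal to $-1$. Because $n\geq 5$, this $B$ is obtained from $M$ by deleting rows and columns $6,7,\ldots,n$; this is exactly where the hypothesis $n\geq 5$ enters, since we need the first five diagonal entries of $M$ to be $3,1,3,2,2$. Applying Corollary \ref{cor:interlacing} repeatedly, once for each deleted index, gives $\mu_1(M)\geq \mu_1(B)$, so it remains to prove $\mu_1(B)>4$.

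To show $\mu_1(B)>4$, I would examine the characteristic polynomial $p(\lambda)=\det(\lambda I-B)$ at $\lambda=4$. Equivalently, compute $\det(B-4I)$, where $B-4I$ is tridiagonal with diagonal $(-1,-3,-1,-2,-2)$ and off-diagonal entries $-1$. Using the standard recurrence $D_k=\alpha_k D_{k-1}-D_{k-2}$ for the leading principal minors of a tridiagonal matrix (with $D_0=1$, $\alpha=(-1,-3,-1,-2,-2)$, and all off-diagonal squares equal to $1$), one finds $D_1=-1$, $D_2=2$, $D_3=-1$, $D_4=0$, $D_5=1$, so $\det(B-4I)=1$. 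Hence $p(4)=\det(4I-B)=(-1)^5\det(B-4I)=-1<0$. Since $p$ is monic of degree $5$ and $p(\lambda)\to+\infty$ as $\lambda\to+\infty$, the polynomial $p$ has a root strictly greater than $4$; as the eigenvalues of the symmetric matrix $B$ are real, its largest eigenvalue satisfies $\mu_1(B)>4$. Combining this with the interlacing bound yields $\mu_1(M)\geq\mu_1(B)>4$.

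The only genuine computation is the five-step determinant recurrence, which is routine; the conceptual content is the reduction to the $5\times5$ corner. I expect the main thing to get right is the bookkeeping ensuring the leading $5\times5$ block really has diagonal $(3,1,3,2,2)$ for every $n\geq 5$ (so that one fixed submatrix works uniformly), together with the sign argument $p(4)<0$ that upgrades the statement ``$4$ is not the top eigenvalue'' into the strict inequality. I note that the $4\times4$ corner would only give $\det(B_4-4I)=0$, hence merely $\mu_1(M)\geq 4$, which is why the $5\times5$ block (and the hypothesis $n\geq 5$) is needed. As an alternative to interlacing, one could instead apply the Rayleigh principle (Lemma \ref{lem:rayleigh}) to a suitable eigenvector of $B$ padded with zeros, but the interlacing route is cleaner and avoids choosing an explicit test vector.
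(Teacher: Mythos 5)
Your proof is correct, and the computation checks out: the leading principal minors of the tridiagonal matrix $B-4I$ are indeed $D_1=-1$, $D_2=2$, $D_3=-1$, $D_4=0$, $D_5=1$, so $p(4)=\det(4I-B)=-\det(B-4I)=-1<0$, and since $p$ is monic with real roots, $\mu_1(B)>4$; repeated application of Corollary \ref{cor:interlacing} then gives $\mu_1(M)\geq\mu_1(B)>4$. The architecture is essentially the paper's: the paper also reduces to the $5\times5$ matrix with diagonal $(3,1,3,2,2)$ via Corollary \ref{cor:interlacing}, phrased as an induction on $n$ in which the last row and column are deleted at each step --- unrolling that induction is exactly your repeated interlacing, so this difference is cosmetic. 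Where you genuinely differ is the base case: the paper certifies $\mu_1>4.08$ for $n=5$ by a Sage computation, whereas you replace this with an exact sign evaluation of the characteristic polynomial at $\lambda=4$, which makes the lemma computer-free and self-contained --- a modest but real gain. Your side remark that $D_4=0$, so the $4\times4$ corner only yields $\mu_1(M)\geq 4$, is also a nice touch: it shows the reduction to the $5\times5$ block is sharp and explains both the hypothesis $n\geq 5$ and why the paper's induction must start at $n=5$.
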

\begin{proof}
We proceed by induction on $n$. When $n=5$, a quick Sage computation shows that $\mu_1(M)>4.08$.  For $n\geq 6$, let $M_n$ denote denote the principal submatrix of $M$ with row $n$ and column $n$ deleted, that is, 
$$M_n=
\begin{pmatrix}
3&-1&0&0&\ldots&0\\
-1&1&-1&0&\ldots&0\\
0&-1&3&-1&\ldots&0\\
0&0&-1&2&\ddots&0\\
0&\ldots&0&\ddots&\ddots&-1\\
0&\ldots&0&0&-1&2\\
\end{pmatrix}.$$
By the induction hypothesis, $\mu_1(M_n)>4$. By Corollary \ref{cor:interlacing},
$\mu_1(M)\geq \mu_1(M_n)>4.$
\end{proof}

\begin{lemma}\label{lem:31322oncycle}
Given the arithmetical structure $\vd=(3,1,3,2,2,\ldots, 2)$  on $\mathcal{C}_n$, we have that $\mu_1(L(\mathcal{C}_n,\vd))=4$ if $n=3,5$ and $\mu_1(L(\mathcal{C}_n,\vd))>4$ for $n=4$ and $n\geq 6$. 
\end{lemma}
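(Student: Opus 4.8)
The plan is to handle the small cases $n=3,4,5$ by direct computation and then treat $n\geq 6$ by relating the cycle matrix $L(\mathcal{C}_n,\vd)$ to the path matrix $M$ from Lemma~\ref{lem:31322onpath}. First I would record that for $n=3$ the structure $\vd=(3,1,3)$ gives a $3\times 3$ matrix whose spectral radius is exactly $4$, and for $n=5$ the structure $\vd=(3,1,3,2,2)$ gives spectral radius exactly $4$ (these are short Sage or by-hand characteristic-polynomial checks; note both appear among the listed structures in Table~\ref{tab:struc}, where $(3,1,3)$ corresponds after relabeling to a value $4.00$ and $(4,3,3,1,2)_{5.00}$-type entries confirm the pattern). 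The case $n=4$ with $\vd=(3,1,3,2)$ can likewise be checked directly to have $\mu_1>4$.

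For the main range $n\geq 6$, the key observation is that the cyclic matrix $L(\mathcal{C}_n,\vd)$ with $\vd=(3,1,3,2,2,\ldots,2)$ differs from a direct sum involving the path matrix $M$ of Lemma~\ref{lem:31322onpath} only in the two ``wrap-around'' corner entries $-1$ that close the cycle. Concretely, $L(\mathcal{C}_n,\vd)$ is obtained from the path matrix $M$ by adding back the entries in positions $(1,n)$ and $(n,1)$ equal to $-1$. The cleanest route is to apply Corollary~\ref{cor:interlacing}: deleting the last row and column of $L(\mathcal{C}_n,\vd)$ destroys the wrap-around edge and yields exactly the matrix $M_n$ (the principal submatrix appearing in the induction of Lemma~\ref{lem:31322onpath}), which already has spectral radius exceeding $4$. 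Thus I would write $\mu_1(L(\mathcal{C}_n,\vd)) \geq \mu_1(M_n) > 4$, where the first inequality is Corollary~\ref{cor:interlacing} and the second is the induction hypothesis content of Lemma~\ref{lem:31322onpath}.

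The main subtlety to get right is the exact indexing match: I must verify that after deleting the appropriate row and column, the remaining principal submatrix of the cyclic $L(\mathcal{C}_n,\vd)$ literally coincides with one of the path matrices $M$ or $M_n$ from Lemma~\ref{lem:31322onpath}, including the diagonal pattern $(3,1,3,2,2,\ldots,2)$ and the absence of both corner $-1$'s. The wrap-around entry $-1$ in position $(1,n)$ sits in the deleted column $n$, and its transpose in the deleted row $n$, so removing row and column $n$ eliminates both offending corner entries simultaneously; the surviving block is precisely the tridiagonal path matrix with diagonal $(3,1,3,2,\ldots,2)$ of size $n-1$, which is the matrix Lemma~\ref{lem:31322onpath} shows has spectral radius $>4$ once $n-1\geq 5$, i.e. $n\geq 6$. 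This alignment, rather than any hard spectral estimate, is where I expect to spend the most care, but it is bookkeeping rather than a genuine analytic obstacle. Finally I would assemble the equality cases $n=3,5$ and the strict inequality cases $n=4$ and $n\geq 6$ into the stated conclusion.
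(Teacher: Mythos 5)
Your proposal is correct and follows essentially the same route as the paper: direct computation for $n=3,4,5$, then for $n\geq 6$ deleting row and column $n$ of $L(\mathcal{C}_n,\vd)$ via Corollary~\ref{cor:interlacing} to obtain exactly the size-$(n-1)$ tridiagonal path matrix of Lemma~\ref{lem:31322onpath} (applicable since $n-1\geq 5$), whence $\mu_1(L(\mathcal{C}_n,\vd))\geq \mu_1(M)>4$. One trivial slip worth fixing: your Table~\ref{tab:struc} cross-check for $n=5$ should point to the rotation $(3,2,2,3,1)_{4.00}$, not to $(4,3,3,1,2)_{5.00}$, though this does not affect the argument since the small cases rest on direct computation.
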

\begin{proof}
For $n=3,4,5$, we prove it directly by computing the spectral radius using Sage,
$$\mu_1(L(\mathcal{C}_3,(3,1,3)))=4, \quad\mu_1(L(\mathcal{C}_4,(3,1,3,2)))\approx 4.41421, \quad\mu_1(L(\mathcal{C}_5,(3,1,3,2,2)))=4. $$
{For $n\geq 6$}, note that 
$$L=L(\mathcal{C}_n,\vd)=
\begin{pmatrix}
3&-1&0&0&\ldots&0&-1\\
-1&1&-1&0&\ldots&0&0\\
0&-1&3&-1&\ldots&0&0\\
0&0&-1&2&-1&0&0\\
0&0&0&\ddots&\ddots&\ddots&0\\
0&0&\ldots&0&-1&2&-1\\
-1&0&\ldots&0&0&-1&2\\
\end{pmatrix}.$$
Let $L_n$ be the principal submatrix of $L$ with row $n$ and column $n$ removed, i.e.,
$$L_n=
\begin{pmatrix}
3&-1&0&0&\ldots&0\\
-1&1&-1&0&\ldots&0\\
0&-1&3&-1&\ldots&0\\
0&0&-1&2&-1&0\\
0&\ldots&0&\ddots&2&-1\\
0&\ldots&0&0&-1&2\\
\end{pmatrix}.$$
By Corollary \ref{cor:interlacing} and Lemma \ref{lem:31322onpath}, we have $\mu_1(L)\geq \mu_1(L_n)>4.$
\end{proof}

\begin{proposition}\label{prop:munonlap}
Let $\vd$ be a non-Laplacian arithmetical structure on $\mathcal{C}_n$ for {$n\geq 6$}, then $\mu_1(L(\mathcal{C}_n,\vd))>4.$
\end{proposition}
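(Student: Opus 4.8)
The plan is to reduce every non-Laplacian structure on $\mathcal{C}_n$ to the presence of a vertex with $\vd$-label $1$ whose neighbors both have label at least $2$, and then to compare against the structure analyzed in Lemma~\ref{lem:31322oncycle}. By Proposition~\ref{prop:dv=1}, since $\vd$ is non-Laplacian and $n \geq 6 \geq 4$, there is an index $i$ with $d_i = 1$ and $d_{i-1}, d_{i+1} \geq 2$. The arithmetical structure constraints force $d_{i-1}, d_{i+1}$ to be reasonably large at such a ``smoothing-ready'' vertex; in fact I expect to argue that one can locate a configuration containing the pattern $(\ldots, 3, 1, 3, \ldots)$ or something dominating it, since $d_i = 1$ means $r_i = r_{i-1} + r_{i+1}$, which tends to push the neighboring $d$-values up.

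\textbf{Main line of argument.} First I would set up the matrix $L = L(\mathcal{C}_n, \vd) = \diag(\vd) - A$ and recall that every diagonal entry $d_v \geq 1$, with all of the off-diagonal $-1$'s in the cyclic adjacency pattern. The key comparison tool is Corollary~\ref{cor:interlacing}: deleting rows and columns only decreases the spectral radius, so it suffices to exhibit a principal submatrix (or a related comparison matrix) whose spectral radius already exceeds $4$. The natural candidate is the $n \times n$ path-type matrix $M$ from Lemma~\ref{lem:31322onpath}, which has diagonal $(3,1,3,2,2,\ldots,2)$ and satisfies $\mu_1(M) > 4$. The strategy is to show that $L$ dominates such a comparison matrix entrywise on the diagonal after we have localized the $(3,1,3,\ldots)$ pattern, so that a monotonicity argument (via the Courant--Weyl inequalities, Lemma~\ref{CW inequality}, writing $L$ as a baseline matrix plus a nonnegative-diagonal correction) yields $\mu_1(L) \geq 4$, with strictness coming either from $n$'s parity or from the extra diagonal mass.

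\textbf{Carrying it out.} Concretely, I would split into two regimes. If $\vd$ already contains the substructure that smooths down to $(3,1,3,\ldots)$ or has diagonal entries at the smoothing vertex and its neighbors at least as large as $3,1,3$, I would invoke Lemma~\ref{lem:31322oncycle} together with Courant--Weyl: writing $L = L' + D$ where $L'$ is the $(3,1,3,2,\ldots,2)$-cycle matrix and $D = \diag(\vd - \vd')$ is a nonnegative diagonal matrix, Lemma~\ref{CW inequality} gives $\mu_1(L) \geq \mu_1(L') + \lambda_n(D) \geq \mu_1(L') > 4$ for $n \geq 6$. The remaining case is when the neighbors of the $\vd=1$ vertex are only $2$ (not $3$); here I would use the smoothing operation to pass to a structure on $\mathcal{C}_{n-1}$ and induct, or directly bound using the degree constraint $r_i d_i = r_{i-1} + r_{i+1}$ to show that a non-Laplacian structure cannot have all of $d_{i-1} = d_{i+1} = 2$ while remaining non-Laplacian globally—propagating the relations around the cycle forces some $d_j \geq 3$ adjacent to the smoothing vertex.

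\textbf{Expected obstacle.} The main difficulty is the bookkeeping in aligning an arbitrary non-Laplacian $\vd$ with the specific comparison matrix: the pattern $(3,1,3,2,\ldots,2)$ is a \emph{lower} bound template, but a general structure need not contain it verbatim, so the hard part is justifying the entrywise domination $\vd \geq \vd'$ (up to a dihedral symmetry and a choice of starting index) that makes the Courant--Weyl step valid. I would expect to handle this by carefully using Proposition~\ref{prop:dv=1} to guarantee a vertex $v_i$ with $d_i=1$, $d_{i\pm1}\geq 2$, and then arguing that non-Laplacianness forces at least one further diagonal entry up to $3$ so that a suitable rotation matches or dominates the template; if direct domination fails, the fallback is an inductive smoothing argument reducing $n$ by one while keeping $\mu_1 > 4$ controlled through Corollary~\ref{cor:interlacing}.
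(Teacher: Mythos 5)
There is a genuine gap, in two places. First, your main case split does not cover all non-Laplacian structures, and the Courant--Weyl domination step fails in the uncovered cases. Writing $L = L' + D$ with $L'$ the $(3,1,3,2,\ldots,2)$-cycle matrix requires $D = \diag(\vd - \vd')$ to be nonnegative in \emph{every} coordinate, not just at the smoothing vertex and its neighbors; but many non-Laplacian structures have several entries equal to $1$ and cannot dominate the template under any dihedral symmetry. A concrete example from Table \ref{tab:struc} is $\vd = (4,1,4,1,4,1)$ on $\mathcal{C}_6$: it has a $d_i=1$ vertex with both neighbors equal to $4$, yet any alignment of the template forces a $2$ against a $1$, so $\lambda_n(D) = -1$ and Courant--Weyl yields only the useless bound $\mu_1(L) \geq \mu_1(L') - 1$. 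Meanwhile your second regime (both neighbors of the $d_i=1$ vertex equal to $2$) is actually empty: $d_i=1$ gives $r_i = r_{i-1}+r_{i+1}$, and $d_{i-1}=d_{i+1}=2$ would force $r_{i-1}=r_{i-2}+r_{i+1}$ and $r_{i+1}=r_{i+2}+r_{i-1}$, hence $r_{i-2}+r_{i+2}=0$, which is impossible. So the dichotomy is not a partition: structures like $(4,1,4,1,4,1)$ fall through both branches.

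Second, your fallback --- inductive smoothing controlled by Corollary \ref{cor:interlacing} --- is the paper's actual route, but as stated it is missing the key step. Interlacing gives $\mu_1(L) \geq \mu_1(L_i)$ for the principal submatrix $L_i$, but $L_i$ is \emph{not} the matrix $L(\mathcal{C}_{n-1},\vd')$ of the smoothed structure: smoothing lowers the diagonal entries at $v_{i-1}$ and $v_{i+1}$ by $1$ \emph{and} introduces a new $-1$ adjacency between them. The paper bridges this by observing $L_i - L(\mathcal{C}_{n-1},\vd') = B$, where $B$ is the $2\times 2$ all-ones block (eigenvalues $2,0,\ldots,0$), which is positive semidefinite, so Courant--Weyl (Lemma \ref{CW inequality}) gives $\mu_1(L_i) \geq \mu_1(L(\mathcal{C}_{n-1},\vd')) + \mu_n(B) = \mu_1(L(\mathcal{C}_{n-1},\vd'))$. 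Without identifying this PSD correction matrix, the reduction from $n$ to $n-1$ does not go through. Finally, the induction needs a correctly handled terminal case: when smoothing produces the Laplacian on $\mathcal{C}_{n-1}$, the inductive hypothesis gives nothing (the Laplacian has $\mu_1 \leq 4$), and this happens exactly when $\vd = (3,1,3,2,\ldots,2)$ up to symmetry --- that is where Lemma \ref{lem:31322oncycle} must be invoked, as the single exceptional structure rather than as a domination template; the base case $n=6$ comes from Table \ref{tab:struc}. Your instincts pointed at all the right tools, but the argument as proposed would not compile into a proof without the matrix $B$ and the corrected case analysis.
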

\begin{proof}
We proceed by induction on $n$. Table \ref{tab:struc} shows the result is true for $n=6.$ For any $n\geq 7$, let $\vd$ determine a non-Laplacian structure on $\mathcal{C}_n$. By Proposition \ref{prop:dv=1}, there is an entry $i \in [n]$ such that $d_i=1$. Smoothing at $v_i$ we get a structure $\vd'$ in $\mathcal{C}_{n-1}$ with $d'_j=d_j-1$ if $j=i-1$ or $j=i+1$ (taken$\mod n$) and $d'_j=d_j$ for all other $j$. Let $L_i(\mathcal{C}_n,\vd)$ be the matrix $L(\mathcal{C}_n,\vd)$ with row $i$ and column $i$ removed. By Corollary \ref{cor:interlacing}, we have 
$$\mu_1(L(\mathcal{C}_n,\vd))\geq \mu_1(L_i(\mathcal{C}_n,\vd)).$$

Considering the matrix $L(\mathcal{C}_{n-1},\vd')$, we get $L(\mathcal{C}_{n-1},\vd')=L_i(\mathcal{C}_n,\vd)-B$ where $B=(b_{s,t})$ is the $(n-1)\times (n-1)$ matrix given by $b_{i-1,i-1}=b_{i,i}=b_{i-1,i}=b_{i,i-1}=1$ and $0$ entries elsewhere. By the Courant-Weyl inequalities (Lemma \ref{CW inequality}) and using the fact that the eigenvalues of $B$ are $\{2,0,0,\ldots, 0\}$, we have
$$\mu_1(L_i(\mathcal{C}_n,\vd)) \geq \mu_1(L(\mathcal{C}_{n-1},\vd'))+\mu_n(B){=} \mu_1(L(\mathcal{C}_{n-1},\vd')).$$  If $\vd'$ is not the Laplacian arithmetical structure of $\mathcal{C}_{n-1}$ then by {the inductive hypothesis}  $\mu_1(L(\mathcal{C}_{n-1},\vd'))>4$, thus
$$\mu_1(L(\mathcal{C}_n,\vd))\geq \mu_1(L_i(\mathcal{C}_n,\vd))\geq \mu_1(L(\mathcal{C}_{n-1},\vd'))>4.$$
If $\vd'$ is the Laplacian on $\mathcal{C}_{n-1}$ then, {up to symmetry}, $\vd$ is the structure $\vd=(3,1,3,2,2,\ldots,2)$. In that case, Lemma \ref{lem:31322oncycle} states that $\mu_1(L(\mathcal{C}_n,\vd))>4$.
\end{proof}

\begin{theorem}\label{thm:minimal}
Among all arithmetical structures on $\mathcal{C}_n$, the Laplacian arithmetical structure $(deg(\mathcal{C}_n) , \textbf{1})$ has minimal spectral radius.
\end{theorem}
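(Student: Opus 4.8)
The plan is to combine the two preceding propositions, which together single out the value $4$ as a threshold separating the Laplacian structure from all others once $n$ is large enough, and to dispatch the finitely many small cases by direct inspection of the table.

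First I would handle $n \in \{3,4,5\}$ directly. For these values, Table \ref{tab:struc} lists every arithmetical structure on $\mathcal{C}_n$ up to the action of $D_{2n}$ together with its spectral radius. The spectral radius is invariant under these dihedral symmetries: permuting the entries of $\vd$ by an element of $D_{2n}$ conjugates $L(\mathcal{C}_n,\vd)$ by a permutation matrix, which preserves its spectrum. Hence it suffices to read off the subscripts in the table, and in each of these three cases the Laplacian structure $(2,2,\dots,2)$ attains the strictly smallest subscript, so it uniquely minimizes the spectral radius.

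Next I would treat $n \geq 6$. By Proposition \ref{prop:mulaplacian}, the Laplacian arithmetical structure satisfies $\mu_1(L(\mathcal{C}_n,\mathrm{deg}(\mathcal{C}_n))) \leq 4$, with equality exactly when $n$ is even. On the other hand, Proposition \ref{prop:munonlap} shows that every non-Laplacian arithmetical structure $\vd$ on $\mathcal{C}_n$ has $\mu_1(L(\mathcal{C}_n,\vd)) > 4$. Chaining these two inequalities yields
\[
\mu_1(L(\mathcal{C}_n,\vd)) > 4 \geq \mu_1(L(\mathcal{C}_n,\mathrm{deg}(\mathcal{C}_n)))
\]
for every non-Laplacian $\vd$, so the Laplacian structure has strictly smaller spectral radius than any competitor, finishing this range.

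Since the substantive content already resides in Propositions \ref{prop:mulaplacian} and \ref{prop:munonlap}, I do not anticipate a genuine obstacle at this final step. The only points requiring care are the boundary of the argument, namely ensuring that the small cases $n=3,4,5$ are covered separately because Proposition \ref{prop:munonlap} is asserted only for $n\geq 6$, and the invocation of dihedral invariance, which is what legitimizes reading the minimizer off a table recorded merely up to symmetry.
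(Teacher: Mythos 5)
Your proposal is correct and follows essentially the same route as the paper: handle small $n$ by reading the table (the paper checks $n\leq 6$, you check $n\leq 5$, both consistent with the hypotheses of Proposition \ref{prop:munonlap}) and for large $n$ chain Proposition \ref{prop:mulaplacian} ($\mu_1\leq 4$ for the Laplacian structure) with Proposition \ref{prop:munonlap} ($\mu_1>4$ otherwise). Your added remarks on dihedral invariance of the spectrum and on where the case split must occur are correct refinements of the paper's terser argument, not a different method.
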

\begin{proof}
{Table \ref{tab:struc} shows the result is true for $n\leq 6$. For $n\geq 7$, if $\vd$ is the Laplacian structure on $\mathcal{C}_n$, then by Proposition 
 \ref{prop:mulaplacian} we have $\mu_1(L(\mathcal{C}_n,\vd))\leq 4$. If $\vd$ is not the Laplacian structure, then Proposition \ref{prop:munonlap} shows $\mu_1(L(\mathcal{C}_n,\vd))>4$.}
\end{proof}
\section{Maximum spectral radius}
In this section we prove that, up to symmetry, the matrix associated to the arithmetical structure $(\vd,\vr)$ with  $\vd=(1,n+2,2,2,\cdots, 2)$ and $\vr=(n,1,2,3,\ldots, n-1)$ has the maximal spectral radius among all arithmetical structures on $\mathcal{C}_n$. The general approach is to first bound the values of the entries of the $\vd$ vector of an arithmetical structure on $\mathcal{C}_n$ by $n+2$ (Proposition \ref{prop n+2 structures}). Then we show that any structure with entries of its $\vd$ vector bounded {above} by $n+1$ will have spectral radius at most $n+2$ (Lemma \ref{lem:discard}).  We finish by showing that among the structures with some entry of its $\vd$ vector equal to $n+2$, the matrix associated to $\vd=(1,n+2,2,2,\cdots, 2)$ has the largest spectral radius and as $n\to \infty$, it approaches $n+2$ from above (Theorem \ref{thm:main}).  We will again use $\mu_1(M)$ to denote the spectral radius of a real and symmetric matrix $M$ in this section.

We  start by bounding the entries of the $\vd$ vector of an arithmetical structure on $\mathcal{C}_n$ and describing the structures that achieve the maximal possible value in their $\vd$ vector.

\begin{proposition}\label{prop n+2 structures}
Let $(\vd, \vr)$ be an arithmetical structure on $\mathcal{C}_n$. Then $d_i \leq n+2$ for all $i\in [n]$. Further, there are two types of arithmetical structures on $\mathcal{C}_n$ satisfying $d_i=n+2$ for some $i\in[n]$,
\begin{enumerate}[(a)]
    \item $\vd=(1,n+2,2,2,\ldots, 2)$ for $n \geq 3$, 
    \item $\vd^k=(1,n+2,1,\underbrace{2,2,\ldots,2}_{k\text{ times }}, 3,\underbrace{2,2,\ldots, 2}_{n-4-k \text{ times}}) \text{ for } k\in\{0,1,\ldots, n-4\}$ for $n\geq 4$,
\end{enumerate} and all their symmetries. Moreover, the arithmetical structures $\vd^k$ and $\vd^{n-4-k}$ are symmetric for $k\in\{0,1,\ldots, n-4\}$.
\end{proposition}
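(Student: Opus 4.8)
My plan is to prove both the bound and the classification simultaneously by strong induction on $n$, leaning entirely on the smoothing and subdivision operations of Corrales and Valencia \cite[Theorem 5.1]{C18}; this is a purely combinatorial statement about the entries of $\vd$, so no spectral input is needed. The base cases $n\le 6$ are recorded in Table \ref{tab:struc}. For the inductive step, let $(\vd,\vr)$ be an arithmetical structure on $\mathcal{C}_n$ with $n\ge 7$. If $\vd=(2,\ldots,2)$ the bound $d_i\le n+2$ is clear, so assume $\vd$ is non-Laplacian. By Proposition \ref{prop:dv=1} there is a vertex $v_a$ with $d_a=1$, and smoothing at $v_a$ produces an arithmetical structure $\vd'$ on $\mathcal{C}_{n-1}$. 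Since smoothing leaves every label unchanged except the two neighbors of $v_a$, which each decrease by $1$, the inductive bound $d_i'\le(n-1)+2=n+1$ immediately gives $d_i\le n+2$ for every $i$, with equality possible only at a neighbor of $v_a$.

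To classify the structures attaining $d_i=n+2$, I would first observe that if $d_i=n+2$ then every vertex carrying label $1$ must be adjacent to $v_i$: otherwise one could smooth at a label-$1$ vertex non-adjacent to $v_i$, leaving $d_i'=d_i=n+2>n+1$ and contradicting the inductive bound on $\mathcal{C}_{n-1}$. Consequently the label-$1$ vertex supplied by Proposition \ref{prop:dv=1} is one of $v_{i-1},v_{i+1}$, and smoothing there yields a structure $\vd'$ on $\mathcal{C}_{n-1}$ with $d_i'=n+1$, the maximum value permitted. By the inductive classification, $\vd'$ must be, up to symmetry, the structure in (a) or one of the structures in (b) on $\mathcal{C}_{n-1}$, with $v_i$ playing the role of the unique vertex of label $n+1$.

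It then remains to reverse the smoothing: $(\vd,\vr)$ is recovered by subdividing $\vd'$ along one of the two edges incident to $v_i$, and I would carry out this short enumeration explicitly. Subdividing the type-(a) structure $(1,n+1,2,\ldots,2)$ on $\mathcal{C}_{n-1}$ at the edge joining its label-$1$ vertex to its big vertex reproduces type (a) on $\mathcal{C}_n$, while subdividing at the big vertex's other edge produces the $k=0$ member of family (b); subdividing a type-(b) structure of $\mathcal{C}_{n-1}$ of parameter $k$ at an edge incident to its big vertex again yields a type-(b) structure on $\mathcal{C}_n$ (of parameter $k+1$ for one of the incident edges). Letting the parent range over all permitted parameters and both incident edges, these subdivisions yield precisely families (a) and (b) with $k\in\{0,1,\ldots,n-4\}$. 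That each listed $\vd$ is a genuine arithmetical structure with an entry equal to $n+2$ is automatic, since it is the subdivision of a valid structure and subdivision raises the big label from $n+1$ to $n+2$; and the asserted symmetry $\vd^k\cong\vd^{n-4-k}$ follows by reading the cyclic word defining $\vd^k$ in reverse.

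The main obstacle I anticipate is the bookkeeping of this final enumeration: one must confirm that the few subdivisions of the extremal $\mathcal{C}_{n-1}$ structures reproduce exactly families (a) and (b) over the stated range of $k$, neither missing a structure nor creating a spurious one, while correctly identifying members related by the dihedral action (in particular that subdividing the two edges at the big vertex of a type-(b) structure gives reflected copies of one another). One must also check the easy but necessary fact that subdividing at an edge away from the big vertex cannot create a second entry equal to $n+2$: every non-maximal entry of a type-(a) or type-(b) structure is at most $3$, so incrementing it leaves the value far below $n+2$ for $n\ge 7$.
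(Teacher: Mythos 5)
Your proposal is correct and follows essentially the same route as the paper: induction on $n$ with the table as base case, smoothing at a $d$-label-$1$ vertex to get the bound $d_i\leq n+2$, the observation that any label-$1$ vertex must neighbor the vertex with $d_i=n+2$ (else smoothing elsewhere contradicts the bound on $\mathcal{C}_{n-1}$), the inductive classification of the resulting structure with entry $n+1$, and the reflection fixing the big entry for $\vd^k\cong\vd^{n-4-k}$. Your only cosmetic difference is phrasing the final enumeration as subdividing the two edges at the big vertex of the extremal $\mathcal{C}_{n-1}$ structures, whereas the paper writes out the same four cases (two parent types times $j\in\{1,3\}$) directly; the computations are identical.
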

\begin{proof}
{We first show $d_i\leq n+2$ for all structures $\vd$ on $\mathcal{C}_n$ and all $i\in [n]$.} We proceed by induction on $n$. Table \ref{tab:struc}, shows the result holds for $n=3,4,5, 6$. Let $n\geq 7$. Given a structure $\vd$ in $\mathcal{C}_n$, if $\vd$ is the Laplacian, then $d_i=2$ for all $i$, hence the result holds. If $\vd$ is not the Laplacian, by Proposition \ref{prop:dv=1}, $d_j=1$ for some $j\in [n]$ {and $d_{j-1}\neq 1\neq d_{j+1}$, where the indices are taken mod $n$}. We can smooth at this vertex $v_j$ to obtain a structure $\vd'$ on $\mathcal{C}_{n-1}$ {(with vertex set $\{v_1,v_2,\ldots, v_n\}\setminus\{v_j\}$)} defined as 
$$d_i'=\begin{cases}
d_i &\text{ if $i\neq j-1,j+1$}\\
d_i-1& \text {if $i=j-1,j+1$}
\end{cases},$$
with indices taken modulo $n$.
Thus, $d_i\leq d_i'+1\leq (n-1)+2+1=n+2$, where the last inequality follows by the induction hypothesis. Together with $d_j=1$, this implies  $d_i\leq n+2$ for all $i\in [n]$.

To prove the second part of the theorem, if $\vd$ is an arithmetical structure on $\mathcal{C}_n$ with some $d_i=n+2$ then, by symmetry, we can assume $i=2$. By Proposition \ref{prop:dv=1}, some entry $d_j$ of $\vd$ is 1. If $j\not\in\{1,3\}$ then smoothing at $d_j$ gives a structure on $\mathcal{C}_{n-1}$ with $d_2=n+2$, a contradiction to the first part of this proof. Thus, $j=1$ or $j=3$. Smoothing at $d_j$ gives a structure $\vd'$ on $\mathcal{C}_{n-1}$ (with vertex set $\{v_1,v_2,\ldots, v_n\}\setminus\{v_j\}$) with $d_2'=n+2-1=n+1$. By the induction hypothesis, up to {symmetry},  $\vd'=(1,n+1,2,2,\ldots, 2)$ or $\vd'=(1,n+1,1,\underbrace{2,2,\ldots,2}_{k\text{ times }}, 3,\underbrace{2,2,\ldots, 2}_{n-5-k \text{ times}})$ for some $k \in\{0,1,\ldots, n-5\}$. Hence,  
$$\vd=\begin{cases}
    (1,n+2,2,2,\ldots,2,{2})& \text{ if } \vd'=(1,n+1,2,2,\ldots, 2) \text { and }j=1\\
    (1,{n+2},{1},{3},\underbrace{2,\ldots,2}_{n-4})& \text{ if } \vd'=(1,n+1,2,2,\ldots, 2) \text { and }j=3\\
    ({1},{n+2},1,\underbrace{2,\ldots,2}_{k}, 3,\underbrace{2,\ldots, 2,{2}}_{n-4-k })& \text{ if } \vd'=(1,n+1,1,\underbrace{2,\ldots,2}_{k}, 3,\underbrace{2,\ldots, 2}_{n-5-k }) \text { and }j=1\\
    (1,{n+2},{1},\underbrace{{2},\ldots,2}_{k+1}, 3,\underbrace{2,\ldots, 2}_{n-5-k})& \text{ if } \vd'=(1,n+1,1,\underbrace{2,\ldots,2}_{k}, 3,\underbrace{2,\ldots, 2}_{n-5-k}) \text { and }j=3\\
\end{cases}$$
In the first case, we get the $\vd$ vector in part $(a)$ of the statement, in the second case we get the vector $\vd^{0}$ from part $(b)$, in the third case we get the vector $\vd^k$ from part $(b)$ for $k \in \{0,1,\ldots, n-5\}$ and in the last case we get $\vd^{k+1}$ from part $(b)$ for $k \in \{0,1,\ldots, n-5\}$.

To prove the last sentence, apply the reflection to $\vd^k$ that fixes the second entry to obtain $\vd^{n-4-k}$.
\end{proof}

The next two lemmas show that any arithmetical structure whose $\vd$ vector entries  are bounded above by $n+1$ will have spectral radius of at most $n+2$. 

\begin{lemma}\label{lem:d*}
Let $n\geq 6$. Let $(\vd,\vr)$ be an arithmetical structure on $\mathcal{C}_n$ such that $d_j\leq n+1$ for all $j\in[n]$ and there is some $i\in [n]$ with $d_i=n+1$, then there is an arithmetical structure $(\vd^*,\vr^*)$ on $\mathcal{C}_n$ such that
\[d_j+d_j^*\leq n+2 \text { for all } j \in [n].\]
\end{lemma}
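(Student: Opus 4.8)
The statement asks, for an arithmetical structure $(\vd,\vr)$ on $\mathcal{C}_n$ with all $d_j\leq n+1$ and some $d_i=n+1$, to produce a \emph{companion} structure $(\vd^*,\vr^*)$ satisfying the pointwise bound $d_j+d_j^*\leq n+2$ for every $j$. My plan is to identify $(\vd^*,\vr^*)$ explicitly using the classification of ``extremal'' structures already in hand. By Proposition \ref{prop n+2 structures} applied to $\mathcal{C}_{n}$, every structure attaining the maximal entry $n+2$ is, up to symmetry, either type (a), namely $(1,n+2,2,2,\ldots,2)$, or one of the type (b) structures $\vd^k$. The key observation I would exploit is that the hypothesis forces $\vd$ itself to be closely tied to these extremal shapes: an entry equal to $n+1$ is just one below the maximum, so $\vd$ should arise (via subdivision/smoothing) from a structure on $\mathcal{C}_{n-1}$ whose large entry is $n+1$, and the companion $\vd^*$ should be chosen so that wherever $\vd$ is large, $\vd^*$ is correspondingly small, and vice versa.

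\textbf{Main steps.} First, I would normalize by symmetry so that the entry equal to $n+1$ sits in position $2$, i.e. $d_2=n+1$. Second, I would invoke Proposition \ref{prop:dv=1} (or the structure of Proposition \ref{prop n+2 structures}) to pin down the shape of $\vd$: since $d_2=n+1$ is nearly maximal, the neighbors and the global profile of $\vd$ are heavily constrained, and I expect $\vd$ to be forced into a short list of explicit forms analogous to the classification in Proposition \ref{prop n+2 structures} with $n+2$ replaced by $n+1$. Concretely I anticipate $\vd$ looking, up to symmetry, like $(1,n+1,2,\ldots,2)$ or one of the $(1,n+1,1,2,\ldots,2,3,2,\ldots,2)$ profiles. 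Third, for each such form I would \emph{exhibit} an explicit companion $\vd^*$ and verify two things: that $(\vd^*,\vr^*)$ really is an arithmetical structure (by checking it appears in the Proposition \ref{prop n+2 structures} list, or by directly constructing the divisibility-consistent $\vr^*$), and that the pointwise sum bound $d_j+d_j^*\leq n+2$ holds coordinate by coordinate. The natural candidate is the type-(a) structure $\vd^*=(1,n+2,2,\ldots,2)$ or a suitable type-(b) structure, arranged so that the large coordinate of $\vd^*$ lands on a coordinate where $\vd$ equals $1$, and the coordinate where $\vd=n+1$ is matched against a coordinate where $\vd^*=1$; then $d_2+d_2^*=(n+1)+1=n+2$ and all remaining coordinates pair a $2$ (or $3$) against a $2$ (or $1$), staying within $n+2$.

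\textbf{Anticipated obstacle.} The delicate part is the bookkeeping at the coordinates carrying the $3$'s in the type-(b) profiles. If $\vd$ has a coordinate equal to $3$, then to respect $d_j+d_j^*\leq n+2$ I need $d_j^*\leq n-1$ there, which is automatic for large $n$ but must be checked not to collide with the single large coordinate of $\vd^*$; likewise I must ensure the $3$-coordinate of $\vd$ and the $n+2$-coordinate of $\vd^*$ never coincide. Managing the relative rotational/reflective alignment of $\vd$ and $\vd^*$ so that the two large entries sit over each other's $1$-entries, while the lone $3$'s avoid the large entries, is where the casework concentrates; the hypothesis $n\geq 6$ is presumably exactly what guarantees enough room (enough $2$-entries) to perform this alignment in every case. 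I would organize the verification as a short case analysis over the forms of $\vd$ produced in the second step, handling the type-(a)-like case first (the cleanest) and then the type-(b)-like cases by specifying the offset $k$ of the companion explicitly.
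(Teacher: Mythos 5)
Your plan has a fatal flaw at the companion-selection step: any candidate $\vd^*$ drawn from the extremal list of Proposition \ref{prop n+2 structures} contains an entry equal to $n+2$, and since every entry of $\vd$ is at least $1$, the coordinate carrying that $n+2$ gives $d_j+d_j^*\geq 1+(n+2)=n+3>n+2$ \emph{no matter how you rotate or reflect}. So the alignment scheme you describe (large entry of $\vd^*$ over a $1$-entry of $\vd$) cannot be made to work with type-(a) or type-(b) companions; the companion is forced to have $d_2^*=1$ at the coordinate where $d_2=n+1$ and to stay uniformly small everywhere else. Your anticipated classification is also incorrect: structures with an entry $n+1$ do not form a short list analogous to Proposition \ref{prop n+2 structures} with $n+2$ replaced by $n+1$. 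They arise by subdividing the extremal structures on $\mathcal{C}_{n-1}$ at any edge away from the large vertex, which produces a growing family with entries as large as $4$ and multiple $1$'s and $3$'s --- for instance $(7,1,4,1,3,1)$ on $\mathcal{C}_6$ is such a structure and fits none of your predicted profiles. A case analysis over explicit forms of $\vd$ would therefore balloon rather than stay short.

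The paper avoids both problems with one move: after normalizing $d_2=n+1$, it takes the \emph{single universal} companion $\vd^*=(3,1,3,2,\ldots,2)$ (the Laplacian on $\mathcal{C}_{n-1}$ subdivided once, hence genuinely an arithmetical structure on $\mathcal{C}_n$), for which $d_2+d_2^*=n+2$ exactly, and then reduces the lemma to showing $d_j\leq n-1$ for $j\neq 2$. In fact it proves the stronger bounds $d_1\leq 3$, $d_3\leq 3$, and $d_j\leq 4$ for $j\notin\{1,2,3\}$, by induction on $n$: smooth at a vertex with $d_\ell=1$ (Proposition \ref{prop:dv=1}), and apply either Proposition \ref{prop n+2 structures} (when the smoothed structure on $\mathcal{C}_{n-1}$ has an entry $n+1$, which is maximal there) or the inductive hypothesis (when it has an entry $n$). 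With those uniform bounds, $d_j+d_j^*\leq n+2$ holds coordinate by coordinate with no alignment casework at all. If you want to salvage your approach, the repair is essentially to rediscover this: abandon extremal companions, prove uniform smallness of the entries of $\vd$ away from position $2$, and pair against a fixed small-profile structure.
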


\begin{proof}
Up to symmetry, we can assume $i=2$. Let $\vd^*$ be the Laplacian arithmetical structure on $\mathcal{C}_{n-1}$ subdivided once at the first edge, that is, $\vd^*=(3,1,3,2,\ldots, 2)$. We have $d_2+d^*_2=n+1+1=n+2.$ It now suffices to show that $d_j\leq n-1$ for all $j\neq 2$. We show a stronger result, that $d_1\leq 3, d_3\leq 3$, and $d_j\leq 4$ for all $j\in[n]$ with $j\not \in \{1,2,3\}$. We proceed by induction on $n$. If $n=6$, Table \ref{tab:struc} shows the result holds. Let $n\geq7$ and 
\[\vd=(d_1, n+1,d_3,d_4,\ldots, d_n) \text{ with } d_j\leq n+1 \text{ for } j\in[n]. \]
Recall there is an index $\ell$ such that $d_\ell=1$. If $\ell\neq 1$ and $\ell\neq 3$, then smoothing at the vertex $v_\ell$ gives the structure 
 \[\vd'=(d_1,n+1,d_3,\ldots, d_{\ell-2},d_{\ell-1}-1,d_{\ell+1}-1,d_{\ell+2},\ldots, d_n)\] on $\mathcal{C}_{n-1}$. Since this structure has $d'_2=n+1$, then by Proposition \ref{prop n+2 structures}, we have $d_1\leq 2$, $d_3\leq 2$, $d_{\ell-1}-1\leq 3$, $d_{\ell+1}-1\leq 3$ and $d_j\leq 3$ for $j \not \in \{1,2,3,\ell-1,\ell+1\}$. Thus, we get our desired bounds.  

If  $\ell=3$ (so $d_3=1)$ then smoothing at vertex $v_3$ gives you the structure $\vd'=(d_1,n,d_4-1,d_5,\ldots, d_n)$ on $\mathcal{C}_{n-1}$. By the inductive assumption, $d_1\leq 3$, $d_4-1\leq 3$, and $d_i\leq 4$ for $ i \in \{5,6,\ldots, n\}$. Hence, we get our desired bounds. The case $\ell_1=1$ (so $d_1=1$) is similar to the case $\ell=3$.
\end{proof}

\begin{lemma}\label{lem:discard}
Let $(\vd, \vr)$ be an arithmetical structure on $\mathcal{C}_n$. If $d_i \leq n+1$ for all $i \in \{1,2,\ldots, n\}$, then $\mu_1(L(\mathcal{C}_n, \vd)) \leq n +2$.
\end{lemma}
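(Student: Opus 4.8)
The plan is to split the argument according to the largest entry of $\vd$. For $n\le 5$ the claim can be verified directly from Table \ref{tab:struc}, so I would assume $n\ge 6$. First suppose $d_i\le n$ for every $i$. Each row of $L(\mathcal{C}_n,\vd)$ has diagonal entry $d_i$ together with exactly two off-diagonal entries equal to $-1$, so each row of $|L(\mathcal{C}_n,\vd)|$ sums to $d_i+2\le n+2$. Combining Corollary \ref{cor:absolute value} with Lemma \ref{prop:rowsum} then yields $\mu_1(L(\mathcal{C}_n,\vd))\le\mu_1(|L(\mathcal{C}_n,\vd)|)\le n+2$, which is the desired bound in this case.

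It remains to treat the case in which some entry of $\vd$ equals $n+1$. Here I would invoke Lemma \ref{lem:d*} to produce the arithmetical structure $\vd^*=(3,1,3,2,\ldots,2)$ satisfying $d_j+d_j^*\le n+2$ for all $j$. Writing $D=\diag(\vd)$ and $D^*=\diag(\vd^*)$, the key idea is to absorb the adjacency term $-A$ into the comparison diagonal $D^*$. Concretely, for every vector $\mathbf{x}$ one has
\[
\mathbf{x}^T L(\mathcal{C}_n,\vd)\,\mathbf{x}=\mathbf{x}^T D\mathbf{x}-\mathbf{x}^T A\mathbf{x}\le \mathbf{x}^T D\mathbf{x}+\mathbf{x}^T D^*\mathbf{x}=\sum_{j=1}^n (d_j+d_j^*)\,x_j^2\le (n+2)\,\mathbf{x}^T\mathbf{x},
\]
where the middle inequality is exactly the assertion that $\diag(\vd^*)+A$ is positive semidefinite, and the last inequality uses $d_j+d_j^*\le n+2$. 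Granting the semidefiniteness, the Rayleigh principle (Lemma \ref{lem:rayleigh}) immediately gives $\mu_1(L(\mathcal{C}_n,\vd))\le n+2$.

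The main obstacle is therefore to establish that $\diag(\vd^*)+A$ is positive semidefinite; I would flag that the naive sign-change congruence $S\bigl(\diag(\vd^*)-A\bigr)S=\diag(\vd^*)+A$ only succeeds when $\mathcal{C}_n$ is bipartite (that is, $n$ even), so a parity-independent certificate is needed. I would supply one by decomposing $\diag(\vd^*)+A$ as a sum of edge contributions: to each edge $\{v_j,v_{j+1}\}$ assign the $2\times 2$ principal block $\bigl(\begin{smallmatrix}p_j&1\\1&q_{j+1}\end{smallmatrix}\bigr)$, where the diagonal weights split as $d_j^*=p_j+q_j$ with $p_j,q_j>0$; such a block is positive semidefinite precisely when $p_jq_{j+1}\ge 1$, and summing the blocks reproduces $\diag(\vd^*)+A$ exactly. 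Taking $p_j=q_j=1$ at every weight-$2$ vertex and the local choice $(p_1,q_1)=(2,1)$, $(p_2,q_2)=(\tfrac12,\tfrac12)$, $(p_3,q_3)=(1,2)$ at the three perturbed vertices satisfies $p_jq_{j+1}\ge 1$ on every edge simultaneously (each product equals $1$), so $\diag(\vd^*)+A$ is a sum of positive semidefinite matrices and hence positive semidefinite for all $n\ge 6$, irrespective of parity. This closes the second case and completes the proof.
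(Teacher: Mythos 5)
Your proof is correct, and while your first case coincides with the paper's and the overall skeleton of the second case is shared (compare $\vd$ against the structure $\vd^*$ of Lemma \ref{lem:d*}), the key semidefiniteness step is handled by a genuinely different mechanism. The paper first passes to $|L|=\diag(\vd)+A$ via Corollary \ref{cor:absolute value}, then writes $(n+2)I-|L|=L(\mathcal{C}_n,\vd^*)+\diag(\mathbf{y})$ with $\mathbf{y}\geq 0$, and concludes via Courant--Weyl because $L(\mathcal{C}_n,\vd^*)=\diag(\vd^*)-A$ is positive semidefinite by Lorenzini's $M$-matrix result \cite[Proposition 1.1]{L89}; it thus only ever needs semidefiniteness of the ``$-A$'' matrix, which is automatic for \emph{any} arithmetical structure. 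You instead bound the quadratic form of $L$ directly, which forces you to certify that $\diag(\vd^*)+A$ is positive semidefinite --- a fact that, as you correctly flag, does not follow from Lorenzini's result by a sign-change similarity when $n$ is odd. Your edge-block certificate is valid: the weights satisfy $p_j+q_j=d_j^*$ for $\vd^*=(3,1,3,2,\ldots,2)$, every edge product is $p_1q_2=2\cdot\tfrac12=1$, $p_2q_3=\tfrac12\cdot 2=1$, or $1\cdot 1=1$, and a sum of positive semidefinite $2\times 2$ blocks is positive semidefinite. So your route trades the citations of \cite[Proposition 1.1]{L89} and Courant--Weyl (and the absolute-value step in the second case) for an explicit, elementary, parity-independent computation; the paper's route is more conceptual and would work verbatim for any comparison structure, whereas yours is tailored to this one vector. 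Two small points to tighten: Lemma \ref{lem:d*} as stated only asserts the \emph{existence} of some arithmetical structure $\vd^*$ with $d_j+d_j^*\leq n+2$; the explicit form $(3,1,3,2,\ldots,2)$, up to a rotation aligning it with the position of the entry $n+1$, comes from the lemma's proof, so you should either cite the proof or observe that your certificate is invariant under the dihedral symmetries. Also, your table check for $n\leq 5$ is not idle and is in fact slightly more careful than the paper: structures with an entry equal to $n+1$ do exist in that range (e.g.\ $(5,3,1,2)$ on $\mathcal{C}_4$), and Lemma \ref{lem:d*} is only stated for $n\geq 6$, so the small cases genuinely need the separate verification you provide.
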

\begin{proof}
If $d_i\leq n$ for all $i\in \{1,2,\ldots, n\}$ then by combining Corollary \ref{cor:absolute value} and Lemma \ref{prop:rowsum}, $\mu_1(L(\mathcal{C}_n,\vd))\leq n+2$. If $d_i=n+1$ for some $i$, let $L'$ be the matrix whose entries are the absolute value of the entries of $L=\diag(\vd)-A_{\mathcal{C}_n}$, that is, $L'=\diag(\vd)+A_{\mathcal{C}_n}$. Thus, $\mu_1(L)\leq \mu_1(L')$ by Corollary \ref{cor:absolute value}. It now suffices to show $\mu_1(L')\leq n+2$.

Let $I_n$ be the $n\times n$ identity matrix and $X=(n+2)I_n-L'=(n+2)I_n-\diag(\vd)-A_{\mathcal{C}_n}$. By Lemma \ref{lem:d*}, there is a structure $\vd^*$ of $\mathcal{C}_n$ such that $d_i+d^*_i\leq n+2$ for all $i \in [n]$ so $d^*_i\leq n+2-d_i$. Then,
\[X=(n+2)I_n-\diag(\vd)-A_{\mathcal{C}_n}=\diag(\vd^*)-A_{\mathcal{C}_n}+\diag(\textbf{y})=L(\mathcal{C}_n,\vd^*)+\diag(\textbf{y})\]
for the vector $\textbf{y}$ with non-negative entries $(n+2-d_i-d_i^*)_{i \in[n]}$. By Proposition 1.1 in \cite{L89}, $L(\mathcal{C}_n,\vd^*)$ is an $M$-matrix, thus $\mu_n(L(\mathcal{C}_n,\vd^*))\geq 0.$ Since $\diag(\textbf{y})$ has non-negative entries, $\mu_n(X)\geq 0$ by Lemma \ref{CW inequality}. Using this and the fact that for any eigenvalue $\lambda$ of $L'$ there is an eigenvalue $n+2-\lambda$ of $X$, we get $\mu_1(L')\leq n+2.$
\end{proof}

We are now ready to prove the main theorem of this article. 

\begin{theorem}\label{thm:main}
Of the arithmetical structures on $\mathcal{C}_n$, up to symmetry, the structure $\mathbf{d} = (1,n+2, 2,2,\cdots, 2)$ has maximum spectral radius which tends to $n+2$ as $n\to\infty$.
\end{theorem}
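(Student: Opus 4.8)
The plan is to combine the results already established with a handful of new estimates tailored to $\vd=(1,n+2,2,\ldots,2)$. By Lemma~\ref{lem:discard}, every arithmetical structure all of whose $\vd$-entries are at most $n+1$ satisfies $\mu_1(L(\mathcal{C}_n,\vd))\le n+2$, and by Proposition~\ref{prop n+2 structures} the only structures with some entry equal to $n+2$ are $\vd$ itself and the family $\vd^k$ of part~(b). Thus it suffices to (1) show $\mu_1(L(\mathcal{C}_n,\vd))>n+2$, which simultaneously beats every structure in the first group and pins the limit down from above; (2) show $\mu_1(L(\mathcal{C}_n,\vd))\ge \mu_1(L(\mathcal{C}_n,\vd^k))$ for every $k$; and (3) bound $\mu_1(L(\mathcal{C}_n,\vd))$ above by $n+2+o(1)$. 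For small $n$ the maximality is already visible in Table~\ref{tab:struc} (the bold entries), so I would assume $n$ large in what follows.

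For step (1) I would apply the Rayleigh principle (Lemma~\ref{lem:rayleigh}) with the explicit test vector $\mathbf{x}$ having $x_2=t$, $x_1=x_3=-1$, and all other coordinates $0$. A direct computation gives
\[
\frac{\mathbf{x}^TL(\mathcal{C}_n,\vd)\mathbf{x}}{\mathbf{x}^T\mathbf{x}}=\frac{(n+2)t^2+4t+3}{t^2+2},
\]
which exceeds $n+2$ precisely when $4t+3>2n+4$, i.e.\ for any $t>(2n+1)/4$; taking $t=n$ works for all $n\ge1$. Hence $\mu_1(L(\mathcal{C}_n,\vd))>n+2$, which disposes of every structure with entries bounded by $n+1$.

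For step (3), and hence the limit, I would show that $N_\varepsilon:=(n+2+\varepsilon)I-L(\mathcal{C}_n,\vd)$ is positive semidefinite for $\varepsilon=2/(n-2)$, giving $\mu_1(L(\mathcal{C}_n,\vd))\le n+2+2/(n-2)\to n+2$. Taking the Schur complement of $N_\varepsilon$ at the second coordinate (whose diagonal entry is the small number $\varepsilon$), the remaining block $C$ is the principal submatrix on the vertices $\{v_1,v_3,v_4,\ldots,v_n\}$, which form a path; its diagonal entries are all at least $n+\varepsilon$ while its off-diagonal row sums are at most $2$, so Gershgorin's theorem gives $C\succeq (n-2+\varepsilon)I$. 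Since the coupling vector $b$ (column $2$ restricted to the remaining coordinates) has $\|b\|^2=2$, positivity of the Schur complement $C-\varepsilon^{-1}bb^T$ reduces to $n-2\ge 2/\varepsilon$, which holds with equality for $\varepsilon=2/(n-2)$. Combined with step~(1), this shows $\mu_1(L(\mathcal{C}_n,\vd))\to n+2$ from above.

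The crux is step (2). Writing $\mathbf{e}=e_3-e_{k+4}$ (with $e_i$ the standard basis vectors), one checks directly from the two $\vd$-vectors that $L(\mathcal{C}_n,\vd)=L(\mathcal{C}_n,\vd^k)+\diag(\mathbf{e})$. If $\mathbf{z}$ denotes a top eigenvector of $L(\mathcal{C}_n,\vd^k)$, then substituting $\mathbf{z}$ into the Rayleigh quotient for $L(\mathcal{C}_n,\vd)$ yields
\[
\mu_1(L(\mathcal{C}_n,\vd))\ge \mu_1(L(\mathcal{C}_n,\vd^k))+\frac{z_3^2-z_{k+4}^2}{\|\mathbf{z}\|^2},
\]
so it suffices to prove $|z_3|>|z_{k+4}|$. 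Using the symmetry between $\vd^k$ and $\vd^{n-4-k}$ I would assume $k+4$ lies in the near half of the cycle, so that $v_3$ and $v_{k+4}$ sit on the same arm emanating from the dominant vertex $v_2$, with $v_3$ the closer of the two. The main obstacle is then a monotone-decay lemma: along such an arm the magnitudes of the entries of the top eigenvector are strictly decreasing away from $v_2$. I would prove this by analyzing the three-term recurrence $(d_i-\mu_1)z_i=z_{i-1}+z_{i+1}$ through the continued fraction for the ratios $z_i/z_{i-1}$; because $\mu_1>n+2$ is large compared with every $d_i\le 3$, each such ratio has absolute value bounded by roughly $1/(n-2)<1$, which forces the strict decay $|z_3|>|z_{k+4}|$. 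Controlling the signs of the eigenvector entries (so the continued fraction stays well defined) and the behavior near the antipodal vertex where the two arms meet are the delicate points; the symmetry reduction is what keeps all comparisons safely within a single arm, away from that meeting point.
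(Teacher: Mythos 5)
Your overall architecture matches the paper's: you invoke Proposition~\ref{prop n+2 structures} and Lemma~\ref{lem:discard} to reduce the problem to comparing $\vd$ with the family $\vd^k$, you use the identical decomposition $L(\mathcal{C}_n,\vd)=L(\mathcal{C}_n,\vd^k)+\diag(\mathbf{e}_3-\mathbf{e}_{k+4})$, and you reduce step~(2) to exactly the inequality $\abs{z_3}>\abs{z_{k+4}}$ that the paper proves. Two of your ingredients are correct and genuinely different, and in fact quantitatively stronger: the test vector with $x_2=t$, $x_1=x_3=-1$ gives the strict bound $\mu_1(L(\mathcal{C}_n,\vd))>n+2$ directly (the paper only gets $\geq n+2$ from $\mathbf{e}_2$ and obtains strictness through the comparison with the $\vd^k$), and your Schur-complement/Gershgorin argument gives $\mu_1(L(\mathcal{C}_n,\vd))\leq n+2+\tfrac{2}{n-2}$, sharper than the paper's $n+2+\tfrac{24}{n}$, which comes from plugging eigenvector-entry estimates into the Rayleigh quotient. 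Both of those computations check out.

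The genuine gap is in step~(2), which you correctly identify as the crux. Your monotone-decay lemma is asserted, not proved: the continued-fraction recursion $s_{i+1}=(d_i-\mu_1)-1/s_i$ for the ratios $s_i=z_i/z_{i-1}$ has no natural anchor on a cycle (there is no far end of the arm from which to initialize it), the ratios are undefined if an eigenvector entry vanishes, and strict decay is most doubtful exactly where you need it --- with $k\leq\lceil\tfrac{n-4}{2}\rceil$ the vertex $v_{k+4}$ can sit at or adjacent to the antipode of $v_2$, the meeting point of the two arms, where the contribution leaking around the other side of the cycle can dominate and the magnitudes need not keep decreasing. You flag these as ``delicate points,'' but resolving them \emph{is} the content of the step, so as written the argument is incomplete. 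Moreover, the machinery is unnecessary: the one-step estimate you already use to get $\abs{z_i}\leq 2/(n-1)$ for $i\neq 2$ (which also forces $z_2=1$ after normalization) finishes the job in two lines, as in the paper. Since both neighbors of $v_{k+4}$ differ from $v_2$, the eigenvalue equation gives $\abs{z_{k+4}}\leq\frac{\abs{z_{k+3}}+\abs{z_{k+5}}}{\mu_1-3}\leq\frac{4/n}{n-1}=O(n^{-2})$, while $v_3$ is adjacent to $v_2$, so $\abs{z_3}\geq\frac{1-2/n}{\mu_1-1}=\Omega(n^{-1})$; for $n\geq 7$ this yields $\abs{z_3}>\abs{z_{k+4}}$ with no monotonicity, sign control, or antipodal analysis at all, and the cases $n\leq 6$ can be read from Table~\ref{tab:struc}. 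If you replace your decay lemma with this two-line estimate, the rest of your proposal stands and even improves the paper's error term in the limit statement.
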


\begin{proof}
As in Proposition \ref{prop n+2 structures}, define
\begin{itemize}
    \item $\vd=(1,n+2,2,2,\ldots, 2)$ for $n \geq 3$, 
    \item $\vd^k=(1,n+2,1,\underbrace{2,2,\ldots,2}_{k\text{ times }}, 3,\underbrace{2,2,\ldots, 2}_{n-4-k \text{ times}}) \text{ for } k\in\{0,1,\ldots, n-4\}$ for $n\geq 4$.
\end{itemize}
Let $M = L(\mathcal{C}_n, \mathbf{d})$ and $M^{(k)} = L(\mathcal{C}_n, \mathbf{d}^k)$, that is
\[M= \begin{pmatrix}
     1&-1&0&0&\ldots&0&-1  \\
     -1&n+2&-1&0&\ldots&0&0  \\
      0&-1&2&-1&0&\ldots&0  \\
      0&0&-1&2&-1&\ldots&0  \\
      0&0&\ddots&\ddots&\ddots&\ddots&0  \\
      0&0&\ddots&&-1&2&-1  \\
      -1&0&&&0&-1&2  \\
\end{pmatrix}\]
and 
\[M^{(k)}= \begin{pmatrix}
     1&-1&0&0&\ldots&0&-1  \\
     -1&n+2&-1&0&\ldots&0&0  \\
      0&-1&1&-1&0&\ldots&0  \\
      0&0&-1&\ddots&-1&\ldots&0  \\
      0&0&\ddots&\ddots&3&\ddots&0  \\
      0&0&\ddots&&-1&\ddots&-1  \\
      -1&0&&&0&-1&2  \\
\end{pmatrix}.\]
First, note that each of these matrices has spectral radius at least $n+2$ by considering the Rayleigh quotient with vector $\mathbf{e}_2$. We first show that $\mu_1(M) > \mu_1(M^{(k)})\geq n+2$ for each $k$. By Lemma \ref{lem:discard} and 
Proposition \ref{prop n+2 structures}, this implies that the arithmetical structure that maximizes the spectral radius is  $\vd$. 
 We then complete the proof by showing that $\mu_1(M) \leq n+2 + \frac{24}{n}$. Before tightening the upper bound we record that the spectral radius of $M$  is at most $n+4$ by combining Corollary \ref{cor:absolute value} and Lemma \ref{prop:rowsum}.

First, fix $k\in \{0,\ldots, \lceil \frac{n-4}{2}\rceil\}$ and we will show that $\mu_1(M) > \mu_1(M^{(k)})$. Let $\mathbf{x}$ be an eigenvector for $\mu_1(M^{(k)})$ and normalize so that it has infinity norm equal to $1$. By Lemma \ref{lem:rayleigh} we have that 
\[
\mu_1(M) \geq \frac{\mathbf{x}^T M\mathbf{x}}{\mathbf{x}^T\mathbf{x}} \quad \mbox{and} \quad \mu_1(M^{(k)}) =\frac{\mathbf{x}^T M^{(k)}\mathbf{x}}{\mathbf{x}^T\mathbf{x}},
\]
thus it suffices to show that 
\[
\frac{\mathbf{x}^T (M-M^{(k)})\mathbf{x}}{\mathbf{x}^T\mathbf{x}} > 0.
\]

Note that the matrix $M-M^{(k)}$ has only $2$ nonzero entries: a $1$ in the third diagonal entry and a $-1$ in the $(k+4)^{th}$ diagonal entry. Hence we have that $\mathbf{x}^T (M-M^{(k)})\mathbf{x} = \mathbf{x}_3^2 - \mathbf{x}_{k+4}^2$, and so it suffices to show that $|\mathbf{x}_3| > |\mathbf{x}_{k+4}|$. We give a lower bound for $|\mathbf{x}_3|$ and an upper bound for $|\mathbf{x}_{k+4}|$. To do this, the eigenvector-eigenvalue equation gives
\[
-\mathbf{x}_{i-1} + (\mathbf{d}^k)_i \mathbf{x}_i - \mathbf{x}_{i+1} = \mu_1(M^{(k)}) \mathbf{x}_i,
\]
for $i\in [n]$, where indices are computed modulo $n$. By the normalization, we have that \[|\mathbf{x}_i(\mu_1(M^{(k)}) - (\vd^k)_i)| \leq 2 \]  and hence for all $i\in[n]\setminus\{2,k+4\}$ we have that $|\mathbf{x}_i| \leq \frac{2}{n}$ and $|\mathbf{x}_{k+4}| \leq \frac{2}{n-1}$, using that $\mu_1(M^{(k)}) \geq n+2$. This also implies that $\mathbf{x}_2 = 1$. Iterating the argument, we have that 
\[
\mathbf{x}_i = \frac{-\mathbf{x}_{i-1} - \mathbf{x}_{i+1}}{\mu_1(M^{(k)}) - (\vd^k)_i} .
\]
And so we have that 
\begin{equation}\label{eigenvector entry upper bound}
|\mathbf{x}_{k+4}| \leq \frac{\frac{4}{n}}{\mu_1(M^{(k)}) - 3} \leq \frac{4}{n(n-1)},
\end{equation}
using that $\mu_1(M^{(k)}) \geq n+2$. Finally, for $i = 3$, we have 
\begin{equation}\label{eigenvector entry lower bound}
|\mathbf{x}_3| \geq \frac{1-\frac{2}{n}}{\mu_1(M^{(k)}) - 1} \geq \frac{1}{n+1} - \frac{2}{n(n+3)},
\end{equation}
using that $\mathbf{x}_2 = 1$ and $n+2 \leq \mu_1(M^{(k)})\leq n+4$. Combining Equations \eqref{eigenvector entry upper bound} 
 and \eqref{eigenvector entry lower bound} gives that 
${|\mathbf{x}_3|\geq |\mathbf{x}_{k+4}|}$ for $n\geq 7$, thus $\mu_1(M) > \mu_1(M^{(k)})$ for $n\geq 7$. The result for $n\leq 6$ can be read from Table \ref{tab:struc}.

Finally, to show that $\mu_1(M) \to n+2$ as $n\to \infty$, let $\mathbf{z}$ be an eigenvector for $\mu_1(M)$. Similar to the previous argument, we may normalize $\mathbf{z}$ so that it has maximum entry equal to $1$, and the same argument as before yields that for $i\not=2$ we have that $|\mathbf{z}_i| \leq \frac{2}{n}$ and that $\mathbf{z}_2 = 1$. Using Lemma \ref{lem:rayleigh} we compute the Rayleigh quotient with eigenvector $\mathbf{z}$,
\[
{ \mu_1(M) = \frac{\mathbf{z}^TM\mathbf{z}}{\mathbf{z}^T\mathbf{z}} = \frac{\sum_{i=1}^n \mathbf{d}_i \mathbf{z}_i^2 - 2\sum_{i=1}^n \mathbf{z}_i \mathbf{z}_{i+1}}{\mathbf{z}^T\mathbf{z}} \leq \frac{n+2 + 2(n-1)\frac{4}{n^2} +2( 2\cdot \frac{2}{n} + (n-2)\cdot \frac{4}{n^2})}{1}.}
\]
Hence, we have $\mu_1(M) \leq n+2 + \frac{24}{n}$.
\end{proof}

\section{Concluding Remarks}
Given the results presented in \cite{W21} for path graphs and here for cycle graphs, a natural question to ask is the following.
\begin{question}\label{q1}
Given a graph $G$, which arithmetical structures maximize the spectral radius of $\diag(\vd)-A$? Which structures minimize it?
\end{question}
As pointed out by \cite[Remark 4.2]{W21}, the Laplacian structure is not always the structure that minimizes the spectral radius. 
One difficulty in answering this question for families of graphs other than path graphs and cycle graphs is that we do not have a complete list of structures or an enumeration for them. Perhaps a natural starting point is to consider Question \ref{q1} on stars or complete graphs because of the connection to Egyptian fractions.  

The number of arithmetical structures is known for path graphs and cycle graphs \cite{B18}. For path graphs, reflecting the graph across its middle vertex (if $n$ is odd) or its middle edge (if $n$ is even) preserves the neighbor set of each vertex. Thus, we can call two structures $(\vd,\vr)$ and $(\vd',\vr')$ symmetric if  
\[d'_k=d_{n+1-k} \text{ and } r'_k=r_{n+1-k}.\]
\begin{question}
How many arithmetical structures on $\mathcal{P}_n$ are there up to symmetry?
\end{question} 
Similarly, we call two structures $(\vd,\vr)$ and $(\vd',\vr')$  on $\mathcal{C}_n$ symmetric if they are in the same orbit of the action of $D_{2n}$ on the set of arithmetical structures on $\mathcal{C}_n$, as explained in Section \ref{subsec:arith}.
\begin{question}
How many arithmetical structures on $\mathcal{C}_n$ are there up to symmetry?
\end{question} 

\section*{Acknowledgments}
The authors would like to thank Dino Lorenzini for interesting remarks and questions. A. Diaz-Lopez's research is supported in part by National Science Foundation grant DMS-2211379. M. Tait's research is supported in part by National Science Foundation grant DMS-2011553.
\bibliography{Ref}

\providecommand{\bysame}{\leavevmode\hbox to3em{\hrulefill}\thinspace}
\providecommand{\MR}{\relax\ifhmode\unskip\space\fi MR }
\providecommand{\MRhref}[2]{%
  \href{http://www.ams.org/mathscinet-getitem?mr=#1}{#2}
}
\providecommand{\href}[2]{#2}
\begin{thebibliography}{10}

\bibitem{A20}
Kassie Archer, Abigail~C. Bishop, Alexander Diaz-Lopez, Luis~D.
  Garc\'{\i}a~Puente, Darren Glass, and Joel Louwsma, \emph{Arithmetical
  structures on bidents}, Discrete Math. \textbf{343} (2020), no.~7, Paper No.
  111850, 23 pp. \MR{4072950}

\bibitem{A23}
Kassie Archer, Alexander Diaz-Lopez, Darren Glass, and Joel Louwsma,
  \emph{Critical groups of arithmetical structures on star graphs and complete
  graphs}, arXiv preprint arXiv:2301.02114 (2023), 1--24.

\bibitem{B18}
Benjamin Braun, Hugo Corrales, Scott Corry, Luis~David Garc\'{i}a~Puente,
  Darren Glass, Nathan Kaplan, Jeremy~L. Martin, Gregg Musiker, and Carlos~E.
  Valencia, \emph{Counting arithmetical structures on paths and cycles},
  Discrete Math. \textbf{341} (2018), no.~10, 2949--2963. \MR{3843283}

\bibitem{BH}
Andries~E Brouwer and Willem~H Haemers, \emph{Spectra of graphs}, Springer
  Science \& Business Media, 2011.

\bibitem{C18}
Hugo Corrales and Carlos~E. Valencia, \emph{Arithmetical structures on graphs},
  Linear Algebra Appl. \textbf{536} (2018), 120--151. \MR{3713448}

\bibitem{C17}
\bysame, \emph{Arithmetical structures on graphs with connectivity one}, J.
  Algebra Appl. \textbf{17} (2018), no.~8, 1850147, 13 pp. \MR{3825308}

\bibitem{Das}
K~Ch Das, \emph{The {L}aplacian spectrum of a graph}, Computers \& Mathematics
  with Applications \textbf{48} (2004), no.~5-6, 715--724.

\bibitem{fiedler1}
Miroslav Fiedler, \emph{Algebraic connectivity of graphs}, Czechoslovak
  mathematical journal \textbf{23} (1973), no.~2, 298--305.

\bibitem{fiedler2}
\bysame, \emph{Laplacian of graphs and algebraic connectivity}, Banach Center
  Publications \textbf{1} (1989), no.~25, 57--70.

\bibitem{GW}
Darren Glass and Joshua Wagner, \emph{Arithmetical structures on paths with a
  doubled edge}, Integers \textbf{20} (2020), Paper No. A68, 18 pp.
  \MR{4142505}

\bibitem{GMS}
Robert Grone, Russell Merris, and VS\_ Sunder, \emph{The {L}aplacian spectrum
  of a graph}, SIAM Journal on matrix analysis and applications \textbf{11}
  (1990), no.~2, 218--238.

\bibitem{guo}
Ji-Ming Guo, \emph{On the {L}aplacian spectral radius of a tree}, Linear
  algebra and its applications \textbf{368} (2003), 379--385.

\bibitem{guo2}
\bysame, \emph{A new upper bound for the {L}aplacian spectral radius of
  graphs}, Linear algebra and its applications \textbf{400} (2005), 61--66.

\bibitem{HZ}
Yuan Hong and Xiao-Dong Zhang, \emph{Sharp upper and lower bounds for largest
  eigenvalue of the {L}aplacian matrices of trees}, Discrete Mathematics
  \textbf{296} (2005), no.~2-3, 187--197.

\bibitem{KR}
Christopher Keyes and Tomer Reiter, \emph{Bounding the number of arithmetical
  structures on graphs}, Discrete Math. \textbf{344} (2021), no.~9, Paper No.
  112494, 11 pp. \MR{4271619}

\bibitem{kirchhoff}
Gustav Kirchhoff, \emph{Ueber die aufl{\"o}sung der gleichungen, auf welche man
  bei der untersuchung der linearen vertheilung galvanischer str{\"o}me
  gef{\"u}hrt wird}, Annalen der Physik \textbf{148} (1847), no.~12, 497--508.

\bibitem{LSC}
Jianxi Li, Wai~Chee Shiu, and Wai~Hong Chan, \emph{The {L}aplacian spectral
  radius of some graphs}, Linear algebra and its applications \textbf{431}
  (2009), no.~1-2, 99--103.

\bibitem{LLT}
Huiqing Liu, Mei Lu, and Feng Tian, \emph{On the {L}aplacian spectral radius of
  a graph}, Linear algebra and its applications \textbf{376} (2004), 135--141.

\bibitem{L89}
Dino~J. Lorenzini, \emph{Arithmetical graphs}, Math. Ann. \textbf{285} (1989),
  no.~3, 481--501. \MR{1019714}

\bibitem{MerrisSurvey1}
Russell Merris, \emph{Laplacian matrices of graphs: a survey}, Linear algebra
  and its applications \textbf{197} (1994), 143--176.

\bibitem{MerrisSurvey2}
\bysame, \emph{A survey of graph {L}aplacians}, Linear and Multilinear Algebra
  \textbf{39} (1995), no.~1-2, 19--31.

\bibitem{mohar}
Bojan Mohar, \emph{Laplace eigenvalues of graphs?a survey}, Discrete
  mathematics \textbf{109} (1992), no.~1-3, 171--183.

\bibitem{shi}
Lingsheng Shi, \emph{Bounds on the ({L}aplacian) spectral radius of graphs},
  Linear algebra and its applications \textbf{422} (2007), no.~2-3, 755--770.

\bibitem{V21}
Alexander Vetter, \emph{Enumerating arithmetical structures on {$E_n$} graphs},
  Veritas \textbf{3} (2021), no.~1, 89--104.

\bibitem{W21}
Dijian Wang and Yaoping Hou, \emph{The extremal spectral radii of the
  arithmetical structures on paths}, Discrete Math. \textbf{344} (2021), no.~3,
  Paper No. 112259, 7. \MR{4187223}

\bibitem{YLT}
Aimei Yu, Mei Lu, and Feng Tian, \emph{On the spectral radius of graphs},
  Linear algebra and its applications \textbf{387} (2004), 41--49.

\bibitem{zhangbook}
Fuzhen Zhang, \emph{Matrix theory: basic results and techniques}, Springer,
  2011.

\bibitem{zhang}
Xiao-Dong Zhang, \emph{The {L}aplacian eigenvalues of graphs: a survey}, arXiv
  preprint arXiv:1111.2897 (2011), 1--35.

\end{thebibliography}
\bibliographystyle{amsplain}
\end{document}